\newtheorem{lemma}{Lemma}
\newtheorem{proposition}{Proposition}
\newtheorem{definition}{Definition}
\newtheorem{corollary}{Corollary}
\newtheorem{assumption}{Assumption}
\begin{document}
%
\title{A Novel Riemannian Optimization Approach to the Radial Distribution Network Load Flow Problem}
%
%
%

\author{Majid~Heidarifar,~\IEEEmembership{Student Member,~IEEE,}
        Panagiotis~Andrianesis,~\IEEEmembership{Member,~IEEE,}
        and~Michael~Caramanis,~\IEEEmembership{Senior~Member,~IEEE}
\thanks{
M. Heidarifar, P. Andrianesis and M. Caramanis are with the Systems Eng. Div., Boston University, Boston,
MA, e-mails: \{mheidari, panosa, mcaraman\}@bu.edu.
Research partially supported by the Sloan Foundation under grant G-2017-9723 and NSF AitF grant 1733827.}
}

%
%

\markboth{}%
{Shell \MakeLowercase{\textit{et al.}}: Bare Demo of IEEEtran.cls for IEEE Journals}
%



\maketitle

\begin{abstract}
In this paper, we formulate the Load Flow (LF) problem in radial electricity distribution networks as an unconstrained Riemannian optimization problem, consisting of two manifolds, and we consider alternative retractions and initialization options.
Our contribution is a novel LF solution method, which we show belongs to the family of Riemannian approximate Newton methods guaranteeing monotonic descent and local superlinear convergence rate.
To the best of our knowledge, this is the first exact LF solution method employing Riemannian optimization.
Extensive numerical comparisons on several test networks illustrate that the proposed method outperforms other Riemannian optimization methods (Gradient Descent, Newton's), and achieves comparable performance with the traditional Newton-Raphson method, albeit besting it by a guarantee to convergence.
We also consider an approximate LF solution obtained by the first iteration of the proposed method, and we show that it significantly outperforms other approximants in the LF literature.
Lastly, we derive an interesting comparison with the well-known Backward-Forward Sweep method.
\end{abstract}

\begin{IEEEkeywords}
Distribution network load flow method, Riemannian optimization, smooth manifold, retraction.
\end{IEEEkeywords}

%
\IEEEpeerreviewmaketitle

\section{Introduction}
%
%
%
%
\IEEEPARstart{E}{lectricity} distribution networks are undergoing unprecedented challenges guided by the desire for highly efficient and reliable grids, characterized by high penetration of intermittent renewable energy sources, storage devices, flexible loads, and transportation electrification.
A fast, efficient, and scalable Load Flow (LF) solution method is the foundation in several recent works on real-time Optimal Power Flow (OPF) \cite{Gan_EtAl-IEEEJournal2016}, load-side primary frequency control \cite{Zhao_EtAl-AutomaticControl2014}, reactive power control \cite{Arnold_EtAl-PowerSys2016}-\cite{Bolognani_EtAl-AutomaticControl2015}, and robust state estimation \cite{Jin_EtAl-AutomaticControl2019}, which aim at facilitating the transition to an increasingly active, distributed and dynamic power system.


Applying Kirchhoff's laws to a power network results in a set of nonlinear LF equations whose solution yields the steady-state grid condition and is obtained by numerical solution of non-linear systems.
In distribution networks, which are typically operated in a radial configuration, besides the widely applicable Newton's method ---\emph{a.k.a.} the Newton-Raphson method--- there exist several, customized, LF numerical methods, e.g., the Backward-Forward Sweep \cite{Shirmohammadi_EtAl-powersys1988}, the implicit Z-bus \cite{Chen_EtAl-powerdelivery1991}, the current injection method \cite{daCosta_EtAl-PowerSys1999}, the direct method \cite{Teng-PowerDelivery2003}, and the holomorphic embedding method \cite{Heidarifar_EtAl-PESGM2019}. 
Furthermore, the LF problem can also be posed as a convex optimization problem; see, e.g., the approach in \cite{Jabr-PowerSys2006} that models the LF problem in a radial network as a conic programming problem by relaxing (non-convex) equality constraints to (convex) inequalities.
Recently, convex relaxation techniques have been widely applied to the OPF problem \cite{Low-ControlNetworkSystems2014}.
Such approaches, however, do not guarantee that the relaxed solution, in general, satisfies the LF equations, i.e., the applicable laws of physics.

Interestingly, the original LF problem posed as an equality-constrained optimization problem in Euclidean space can be reformulated to an equivalent unconstrained optimization problem whose search space is a Riemannian \emph{manifold}, 
which, in turn, can be solved with Riemannian optimization methods.
Riemannian optimization minimizes a real-valued function over a smooth manifold \cite{Abs_EtAl-Book2008},
and has recently received considerable attention with several applications in signal processing, machine learning, computer vision, numerical linear algebra, etc.
Relying on the lower dimension and the underlying geometric characteristics of a manifold, its efficiency exceeds that of Euclidean constrained optimization.
The application of Riemannian optimization to power systems has been limited, however, by the lack of a well-organized procedure to convert a constrained optimization problem whose search space forms a smooth manifold into a Riemannian optimization instance \cite{Douik_EtAl-Arxiv}.
More importantly, the feasible space in most problems encountered in engineering disciplines, e.g., the OPF problem, is a non-smooth manifold as there are extra constraints in addition to having to reside on a manifold.
This imposes additional challenges to the widespread application of Riemannian optimization.


Traditional optimization methods have been recently extended to the case of Riemannian optimization, e.g., the unconstrained (Riemannian) Gradient Descent, Newton's, trust region and approximate Newton methods in \cite{Abs_EtAl-Book2008}, (Riemannian) Stochastic Gradient Descent in \cite{Bonnabel-AutomaticControl2013}, and the (Riemannian) consensus method \cite{Tron_EtAl-AutomaticControl2013}. 
Riemannian optimization has also found several applications in low-rank matrix completion \cite{Boumal_EtAl-Advances2011, Vandereycken-SIAM2013, Cambier_EtAl-SIAM2016}, dimension reduction for Independent Component Analysis \cite{Theis_EtAl-Springer2009}, tensor decomposition \cite{Sun_EtAl-Transactions2016}, online learning \cite{Shalit_EtAl-JMech2012},  port-Hamiltonian systems \cite{Sato-AutomaticControl2017, Sato_EtAl-AutomaticControl2018}, feedback particle filters \cite{Zhang_EtAl-AutomaticControl2018}, and unscented Kalman filters \cite{Menegaz_EtAl-AutomaticControl2019}.

In the power systems literature, \cite{Bolognani_EtAl-AllertonConf2015} was the first to introduce the notion of a Power Flow (PF) manifold presenting several LF approximations using the concept of a \textit{tangent space}.
The PF manifold describes the LF equations of a power network with a general (radial or meshed) topology, and the tangent space 
around a flat start solution, which is a point on the PF manifold, is presented as the best linear approximation to the LF equations.
The proposed linear approximant in \cite{Bolognani_EtAl-AllertonConf2015} reduces to a DC LF model assuming zero shunt admittances and purely inductive lines.
Employing the approximate LF technique developed in \cite{Bolognani_EtAl-AllertonConf2015}, \cite{Hauswirth_EtAl-PowerTech2017} presents an online OPF technique using a discrete-time projected gradient descent scheme.
A continuous-time counterpart of \cite{Hauswirth_EtAl-PowerTech2017} is proposed in \cite{Hauswirth_EtAl-AllertonConf2016}.
Note that the approach taken by \cite{Bolognani_EtAl-AllertonConf2015}--
\cite{Hauswirth_EtAl-AllertonConf2016} does not include a mapping of the solution from the tangent space to the manifold ---\emph{a.k.a.} a \textit{retraction} in the Riemannian optimization literature--- thus yielding an approximate (or sub-optimal) solution, which does not in general satisfy the LF equations.

Since our focus is on radial distribution networks, we employ the well-known \emph{Branch Flow Model} (BFM) \cite{Baran_EtAl-PowerDelivery1989} ---\emph{a.k.a.} the DistFlow model--- which uses the voltage and current squared magnitudes; angles can be recovered by the LF solution.
The BFM has been recently included into an OPF setting \cite{Farivar_EtAl-PowerSys2013}, resulting in a non-convex optimization problem, due to a quadratic equality constraint, which, when relaxed to a convex inequality constraint, yields a Second Order Cone Programming (SOCP) problem.
However, this relaxation is exact only when certain conditions are met, \cite{Farivar_EtAl-SmartGridCommConf2011, Bose_EtAl-AllertonConf2011, Gan_EtAl-CDC2012, Lavaei_EtAl-PowerSys2014, Gan_EtAl-AutCont2015, Huang_EtAl-PowerSys2017, Nick_EtAl-AutCont2018}, and may occasionally provide solutions that do not satisfy the LF equations, hence physically meaningless.
A linear LF model originating from the BFM, namely the \emph{simplified} DistFlow model  ---\emph{a.k.a.} the LinDistFlow model--- which was proposed in \cite{Baran_EtAl-PowerDelivary1989CapSizing, Baran_EtAl-PowerDelivery1989_Reconfiguration}, has also been  employed in approximate OPF settings.
In fact, the LinDistFlow solution seems to improve the quality of the linear approximant in \cite{Bolognani_EtAl-AllertonConf2015}; the latter becomes equivalent with the former, assuming zero shunt admittances and using a nonlinear change of coordinates motivated by the fact that the basic LF equations are purely quadratic in the voltage magnitudes.

To the best of our knowledge, this paper is the first application of Riemannian optimization that yields an exact (not approximate) LF solution.
More specifically, our contributions are as follows.
First, we formulate the radial distribution network BFM as an unconstrained Riemannian optimization problem, for which we propose alternative manifolds, retractions, and initializations.
Second, we introduce an exact LF solution method, and show it belongs to the Riemannian approximate Newton methods, guarantees descent at each iteration and local superlinear convergence rate.
Third, we show through extensive numerical comparisons on several test networks that the proposed approximate Newton method outperforms in terms of computational effort other Riemannian optimization methods, namely the Riemannian Gradient Descent and the Riemannian Newton's methods, and achieves comparable performance with the traditional Newton-Raphson method.
Fourth, we illustrate that the first iteration of the proposed method ---considered as an approximate solution to the LF problem--- outperforms in terms of accuracy approximate solution methods in the LF literature (\cite{Bolognani_EtAl-AllertonConf2015} and the LinDistFlow solution).
Fifth, we present an interesting comparison with the Backward-Forward Sweep method, which shows that while both methods stay on the PF manifold at each iteration, they move along different directions.

The remainder of the paper is organized as follows.
In Section \ref{secRO}, we briefly review concepts from Riemannian optimization.
In Section \ref{SecLFasRO}, we formulate the BFM based LF problem as an unconstrained Riemannian optimization problem, and we propose alternative retractions and initializations.
In Section \ref{SecSearchMethods}, we introduce the proposed Riemannian approximate Newton method.
In Section \ref{SecSimRes}, we present extensive numerical comparisons on several test distribution networks.
Finally, in Section \ref{SecConc}, we conclude and provide directions for further research.
To improve paper readability, proofs are moved to an Appendix.

\section{Riemannian Optimization} \label{secRO}
In this section, we provide a brief overview of concepts and notation used in the context of smooth manifolds (Subsection \ref{Sectmanifolds}), we define the essential tool of \textit{retractions} (Subsection \ref{SectRetraction}), and we introduce Riemannian optimization methods (Subsection \ref{ROMethods}).
The interested reader is referred to \cite{Tu-Book2011} and \cite{Abs_EtAl-Book2008} for a detailed exposition of manifolds and Riemannian optimization, respectively.

\subsection{Smooth Manifolds}
\label{Sectmanifolds}


Consider the set described by $\mathcal{M} = \{\mathbf{x} \in \mathbb{R}^{n} | \mathbf{h}(\mathbf{x}) = \mathbf{0}\}$,
where $\mathbf{h}: \mathbb{R}^{n}\mapsto\mathbb{R}^{m}$ is a smooth map with $m \leq n$.
Then, $\mathcal{M}$ is a \textit{smooth manifold} of dimension $n-m$ of $\mathbb{R}^{n}$ \cite{Boumal_EtAl-IMA2018}.
A Riemannian optimization problem is described as 
$ \min_{\mathbf{x} \in \mathcal{M}}f(\mathbf{x})$, where $\mathbf{x}$ is a vector of (unknown) variables on the manifold, and $f(\mathbf{x}): \mathcal{M}\mapsto\mathbb{R}$ a smooth real-valued function.
Analogous to the concept of locally approximating a function by its derivative, the notion of \textit{tangent space}, $\mathcal{T}_{\mathbf{x}}\mathcal{M}$, is defined for every point $\mathbf{x} \in \mathcal{M}$ to locally approximate the manifold around $\mathbf{x}$.
$\mathcal{T}_{\mathbf{x}}\mathcal{M}$ is a vector space expressed by:
\begin{equation}
    \mathcal{T}_{\mathbf{x}}\mathcal{M} = \{\boldsymbol{\xi}\in\mathbb{R}^{n}: \textrm{D}\mathbf{h}(\mathbf{x})[\boldsymbol{\xi}]=\mathbf{0} \},
    \label{TanSpEq}
\end{equation}
where $\textrm{D}\mathbf{h}(\mathbf{x})$ denotes the differential of $\mathbf{h}$ at $\mathbf{x}$.
The notion of the tangent space generalizes the concept of the directional derivative as represented by the term $\textrm{D}\mathbf{h}(\mathbf{x})[\boldsymbol{\xi}]$, i.e., the derivative of $\mathbf{h}$ at $\mathbf{x}$ along the direction $\boldsymbol{\xi}$.
The point $\mathbf{x}$ is translated as the center or zero vector in $\mathcal{T}_{\mathbf{x}}\mathcal{M}$, and any $\boldsymbol{\xi}$ that satisfies \eqref{TanSpEq} is called a \textit{tangent vector} \cite{Abs_EtAl-Book2008}. 
The notions of direction and length in $\mathcal{T}_{\mathbf{x}}\mathcal{M}$ are introduced by a \textit{Riemannian metric} expressed by the classical dot product $\langle., .\rangle$, thus turning 
$\mathcal{M}$ into a \textit{Riemannian} manifold of the Euclidean space $\mathbb{R}^{n}$. 

Given a smooth real-valued function $f(\mathbf{x}): \mathcal{M}\mapsto\mathbb{R}$, the notion of \textit{Riemannian gradient} of the smooth mapping $f$ at $\mathbf{x}\in\mathcal{M}$ is the unique tangent vector denoted by $\textrm{grad}f(\mathbf{x}) \in \mathcal{T}_{\mathbf{x}}\mathcal{M}$ that satisfies \cite[Eq.~3.31]{Abs_EtAl-Book2008}:
\begin{equation}
    \langle\mathrm{grad} f(\mathbf{x}), \boldsymbol{\xi}\rangle = \textrm{D}f(\mathbf{x})[\boldsymbol{\xi}], \ \ \forall \boldsymbol{\xi} \in \mathcal{T}_\mathbf{x}\mathcal{M}. \nonumber
    \label{RiemGradDef}
\end{equation}
We define $\bar{f}(\mathbf{x}) := f(\mathbf{x}):\mathbb{R}^n\mapsto\mathbb{R}$, and denote the classical (Euclidean) gradient of $\bar{f}$ at $\mathbf{x}$ by $\textrm{Grad}\bar{f}(\mathbf{x})$. 
The Riemannian gradient of $f$ at $\mathbf{x}$ is defined as follows.
\begin{definition}
The Riemannian gradient of the smooth mapping $f$ at $\mathbf{x}\in\mathcal{M}$ is the orthogonal projection of $\textup{Grad}\bar{f}(\mathbf{x})$ to the tangent space, denoted by \cite[Eq.~3.37]{Abs_EtAl-Book2008}:
\begin{equation} \label{RiemGrad}
    \textup{grad}f(\mathbf{x}) = \mathbf{\Pi}_{\mathbf{x}} \textup{Grad}\bar{f}(\mathbf{x}),
\end{equation}
where $\mathbf{\Pi}_{\mathbf{x}}: \mathbb{R}^{n} \mapsto \mathcal{T}_{\mathbf{x}}\mathcal{M}$ is the orthogonal projection matrix given by
$ \mathbf{\Pi}_{\mathbf{x}} = \mathbf{I}_{n} - {\textup{D}\mathbf{h}(\mathbf{x})}^T \big( {\textsc{D}\mathbf{h}(\mathbf{x})} {\textup{D}\mathbf{h}(\mathbf{x})}^T\big)^{-1} {\textup{D}\mathbf{h}(\mathbf{x})}$,
with $\mathbf{I}_n$ the $n \times n$ identity matrix \cite{Strang-Book2006}.
\end{definition}

The notion of \textit{Riemannian Hessian} requires taking the derivative of the gradient, thus implying moving between tangent spaces, which is achieved by some \textit{affine connection}. 
Employing the \textit{Riemannian connection} on $\mathcal{M}$, denoted by $\nabla$, the Riemannian Hessian of $f$ at $\mathbf{x}$ is defined as follows.
\theoremstyle{definition}
\begin{definition}
\cite[Def.~5.5.1 and Eq.~5.15]{Abs_EtAl-Book2008} The Riemannian Hessian of the smooth mapping $f$ at $\mathbf{x}\in\mathcal{M}$ is the linear mapping $\textup{hess}f(\mathbf{x})$ of $\mathcal{T}_{\mathbf{x}}\mathcal{M}$ into itself defined as:
\begin{equation}
    \textup{hess}f(\mathbf{x})[\boldsymbol{\xi}] =
    \nabla_{\boldsymbol{\xi}} \textup{grad}f(\mathbf{x})=
    \mathbf{\Pi}_{\mathbf{\mathbf{x}}} \textup{Dgrad}f(\mathbf{x}) [\boldsymbol{\xi}],
    \label{Hessian}
\end{equation}
for all $\boldsymbol{\xi} \in \mathcal{T}_{\mathbf{x}}\mathcal{M}$, where $\textup{Dgrad}f(\mathbf{x}) [\boldsymbol{\xi}]$ denotes the directional derivative of the Riemannian gradient along $\boldsymbol{\xi}$.
\end{definition}

\subsection{Retractions}
\label{SectRetraction}
Riemannian optimization requires a mapping from a tangent vector (as we move along a suitable direction on the tangent space) to the manifold.
\emph{Retractions} are such tractable mappings \cite{Abs_EtAl-Book2008} defined as follows.
\begin{definition}
\label{DefRetraction}
\cite[Def.~4.1.1]{Abs_EtAl-Book2008} A retraction at a point $\mathbf{x}\in\mathcal{M}$ is a smooth mapping denoted by $\mathcal{R}_{\mathbf{x}}: \mathcal{T}_{\mathbf{x}}\mathcal{M}\mapsto\mathcal{M}$ that satisfies:
\begin{enumerate}
    \item $\mathcal{R}_{\mathbf{x}}(\mathbf{0}_\mathbf{x})=\mathbf{x}$, known as the \emph{centering} or the \emph{consistency} condition, indicating that $\mathbf{0}_\mathbf{x}$, i.e., the origin of $\mathcal{T}_{\mathbf{x}}\mathcal{M}$, must map to the tangent point $\mathbf{x}$.
    \item $\frac{\mathrm d}{\mathrm d t}\mathcal{R}_\mathbf{x}(t\boldsymbol{\xi})|_{t=0}=\boldsymbol{\xi}$, known as the \emph{local rigidity} condition, requiring the mapping to locally move towards the same tangent vector direction at least up to the first order.
\end{enumerate}
\end{definition}

In practice, retraction mappings are obtained by exploiting the geometric properties of the manifold while considering the computational complexity ---see e.g., Fig. \ref{fig:Retraction}.
\begin{figure}
  \centering
  \includegraphics[width=0.4\linewidth]{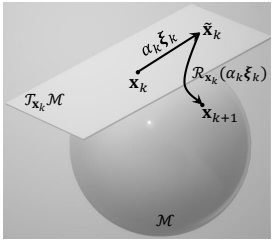}
  \caption{Retraction on the unit sphere. From point $\Tilde{\mathbf{x}}_k = \mathbf{x}_k+\alpha_k \boldsymbol{\xi}_k$, where $\alpha_k$ is a stepsize employed along direction $\boldsymbol{\xi}_k$, point $\mathbf{x}_{k+1}$ 
  on the manifold is obtained by normalization, i.e., $\mathbf{x}_{k+1} = \mathcal{R}_{\mathbf{x}_k}\big( \alpha_k \boldsymbol{\xi}_k\big) = \frac{\mathbf{x}_k +\alpha_k \boldsymbol{\xi}_k}{\|\mathbf{x}_k +\alpha_k \boldsymbol{\xi}_k\|_{2}}$ \cite{Abs_EtAl-Book2008}.
  }
  \label{fig:Retraction}
\end{figure}

\subsection{Riemannian Optimization Methods}
\label{ROMethods}
In Riemannian optimization, similar to classical unconstrained optimization, a descent search direction on the tangent space, $\boldsymbol{\xi} \in \mathcal{T}_\mathbf{x}\mathcal{M}$, satisfies the following condition:
\begin{equation}
    \langle\mathrm{grad} f(\mathbf{x}), \boldsymbol{\xi}\rangle < 0.
    \label{descentDirection}
\end{equation}
Several first and second order algorithms including Gradient Descent, Newton's and approximate Newton methods have been extended in \cite{Abs_EtAl-Book2008} to the Riemannian optimization settings.
Also, stepsize rules are directly applicable, e.g., the Armijo stepsize rule with scalars $\bar{\alpha} > 0, \beta,\sigma \in (0,1)$, which, at iteration $k$, finds the smallest non-negative integer $m$, satisfying:   \begin{equation} \label{Armijo}
    f(\mathbf{x}_{k}) - f\big(\mathcal{R}_{\mathbf{x}_{k}}(\beta^{m}\bar{\alpha}\boldsymbol{\xi}_{k})\big) \geq - \sigma \langle\mathrm{grad}f(\mathbf{x}_{k}), \beta^{m}\bar{\alpha}\boldsymbol{\xi}_{k} \rangle.  
  \end{equation}
In what follows, we provide a brief overview of Riemannian optimization methods.

\subsubsection{Riemannian Gradient Descent}
Algorithm \ref{GradDecAlg} reviews the Riemannian Gradient Descent (see \cite[Algorithm 1]{Abs_EtAl-Book2008}) using the Armijo stepsize rule.
It is shown in \cite[Thm. 4.3.1]{Abs_EtAl-Book2008} that Algorithm \ref{GradDecAlg} converges to a critical (stationary) point of $f$, with linear convergence rate \cite[Thm. 4.5.6 and Def. 4.5.1]{Abs_EtAl-Book2008}.
\begin{algorithm}[h]
\SetAlgoLined
\KwIn{$\mathcal{M}$, $f: \mathcal{M}\mapsto\mathbb{R}$,
$\mathcal{R}_\mathbf{x}: \mathcal{T}_\mathbf{x}\mathcal{M}\mapsto\mathcal{M}$, scalars $\bar{\alpha} > 0, \beta,\sigma \in (0,1)$, and $\epsilon > 0$ (small).}
\KwOut{Critical point $\mathbf{x}^{*}\in \mathcal{M}$ of $f : \mathcal{M}\mapsto\mathbb{R}$}
\textbf{Initialization:} $k=0$, $\mathbf{x}_{0} \in \mathcal{M}$\;
 \While{$\|\mathrm{grad}f(\mathbf{x}_{k})\|_{2} > \epsilon$}{
  Find direction $\boldsymbol{\xi}_k \in \mathcal{T}_{\mathbf{x}_{k}}\mathcal{M}$ satisfying \eqref{descentDirection}\;
  $\mathbf{x}_{k+1} = \mathcal{R}_{\mathbf{x}_{k}}(\beta^{m}\bar{\alpha}\boldsymbol{\xi}_{k})$ satisfying Armijo rule \eqref{Armijo}\;
  $k = k+1$\;
 }
 \caption{Riemannian Gradient Descent}
 \label{GradDecAlg}
\end{algorithm}

\subsubsection{Riemannian Newton's Method}
Algorithm \ref{NewtonAlg} reviews the Riemannian Newton's method (see \cite[Algorithm 5]{Abs_EtAl-Book2008}).
The direction $\boldsymbol{\xi}_k \in \mathcal{T}_{\mathbf{x}_{k}}\mathcal{M}$ is obtained by solving the Newton equation \eqref{NewtonEq}, where the Jacobian $\mathbf{J}( \mathbf{x}_k ) := \mathrm{hess} f ( \mathbf{x}_k )$,
  \begin{equation}
   \mathbf{J}( \mathbf{x}_k ) {\xi}_k =  \mathrm{hess} f(\mathbf{x}_k) [\boldsymbol{\xi}_k]
    = - \mathrm{grad} f(\mathbf{x}_k). \label{NewtonEq}
  \end{equation}
It is, however, not guaranteed that $\boldsymbol{\xi}_k$ is a descent direction unless $\mathrm{hess} f(\mathbf{x}_k)$ is positive definite.
Although the Riemannian Newton's method enjoys a local superlinear (at least quadratic) convergence rate \cite[Thm. 6.3.2]{Abs_EtAl-Book2008}, it lacks global convergence, i.e., there exist initial points for which the method does not converge \cite{Abs_EtAl-Book2008}.
In addition, evaluating the Hessian and solving \eqref{NewtonEq} may be computationally expensive.

\begin{algorithm}[h]
\SetAlgoLined
\KwIn{$\mathcal{M}$, $f : \mathcal{M}\mapsto\mathbb{R}$,  $\mathcal{R}_\mathbf{x}: \mathcal{T}_\mathbf{x}\mathcal{M}\mapsto\mathcal{M}$, scalar $\epsilon > 0 $ (small). }
\KwOut{Critical point $\mathbf{x}^{*}\in \mathcal{M}$ of $f : \mathcal{M}\mapsto\mathbb{R}$}
\textbf{Initialization:} $k=0$, $\mathbf{x}_{0} \in \mathcal{M}$\;
 \While{$\|\mathrm{grad}f(\mathbf{x}_{k})\|_2 > \epsilon$}{
  Find direction $\boldsymbol{\xi}_k \in \mathcal{T}_{\mathbf{x}_{k}}\mathcal{M}$ satisfying \eqref{NewtonEq}\;
  $\mathbf{x}_{k+1} = \mathcal{R}_{\mathbf{x}_{k}}(\boldsymbol{\xi}_{k})$\;
  $k = k+1$\;
 }
 \caption{Riemannian Newton's Method}
 \label{NewtonAlg}
\end{algorithm}

\subsubsection{Riemannian Approximate Newton Methods}
To overcome the drawbacks of Newton's method, \cite[Sec. 8.2]{Abs_EtAl-Book2008} presents approximate Newton methods, which maintain local superlinear convergence (under certain conditions), while having stronger global convergence properties, and require lower computational effort.
A class of approximate Newton methods approximates/modifies the Jacobian, so that \eqref{NewtonEq} becomes:
\begin{equation}
    \big[ \mathbf{J}( \mathbf{x}_k ) + \mathbf{E}_k \big] \boldsymbol{\xi}_k = - \mathrm{grad} f( \mathbf{x}_k ),
    \label{ApproximateJacob}
\end{equation}
where $\mathbf{E}_k$ denotes the approximation error, which is assumed to have sufficiently small bounds (in order to preserve the superlinear convergence).

\section{LF as a Riemannian Optimization Problem}
\label{SecLFasRO}
We consider a radial distribution network with node 0 representing the slack node, typically a distribution substation.
We denote the set of nodes, excluding the slack node, with $\mathcal{J} = \{ 1, 2, ..., J \}$.
Exploiting the radial topology, we denote the set of branches (i.e., lines) with $\mathcal{J}$, where branch $j$ has node $j$ as its downstream node, and we label its upstream node with $i$.
The set of branches whose upstream node is $j$ is denoted by $\mathcal{J'}(j)$.
At node $j$, $v_j$ is the squared voltage magnitude, $p_j$ and $q_j$ the net real and reactive power injections, with negative values representing power consumption.
The slack node voltage $v_0$ is typically assumed to be fixed.
At branch $j$, $P_{j}$ and $Q_{j}$ are the real and reactive power flows at the sending (upstream) end $i$, respectively, $l_j$ is the squared current magnitude, $r_{j}$ and $x_{j}$ the series resistance and reactance, and $a_j$ the transformer tap ratio \cite{Matpower2019}.
The total shunt admittance at node $j$ including the capacitance of the lines connected to node $j$ is denoted by $Y_j = G_j - \imath B_j$, where $\imath = \sqrt{-1}$.

In what follows, we present the BFM formulation (Subsection \ref{BFM}), the LF problem reformulation as a Riemannian optimization problem and the Riemannian gradient and Hessian calculations (Subsection \ref{RiemForm}), the proposed retractions (Subsection \ref{ProposedRetraction}), and initializations (Subsection \ref{PropInitial}).

\subsection{BFM Formulation} \label{BFM}
The LF equations of a radial network, using the BFM \cite{Baran_EtAl-PowerDelivery1989}, and accounting for shunt admittances and transformer tap ratios \cite{Matpower2019}, are given by:
\begin{gather}
    \sum_{j'\in \mathcal{J'}(j)}{P_{j'}} - P_{j} + {a_j^2} r_{j} {l_{j}}  + G_{j}v_{j}= p_j,  \ \ \ \forall j \in \mathcal{J}, \label{RealBalance}\\
    \sum_{j' \in \mathcal{J'}(j)}{Q_{j'}} - Q_{j} + {a_j^2}x_{j}{l_{j}} - B_{j}v_{j} = q_j,  \ \ \ \forall j \in \mathcal{J}, \label{ReactiveBalance}\\
    v_j = \frac{v_i}{a_j^2}  - 2(r_{j}P_{j}+x_{j}Q_{j}) + {a_j^2}(r_{j}^{2}+x_{j}^{2}){l_{j}}, \forall j \in \mathcal{J}, \label{VoltDrop}\\
    v_{i}l_{j} = P_{j}^{2} + Q_{j}^{2}, \ \ \ \forall j \in \mathcal{J}, \label{NonConvMan}
\end{gather}
where \eqref{RealBalance} and \eqref{ReactiveBalance} ensure real and reactive power balance, respectively, at node $j\in\mathcal{J}$, 
\eqref{VoltDrop} defines the voltage drop across line $j$, and 
\eqref{NonConvMan} describes the nonlinear relation between the current of line $j$, the real and reactive power flowing along line $j$ and the voltage at the upstream node $i$.

\begin{assumption} \label{Feasibility}
A feasible LF solution exists for the BFM described by \eqref{RealBalance}--\eqref{NonConvMan}.
\end{assumption}

Assumption \ref{Feasibility} is a very mild assumption, which generally holds for practical problems.
Problematic cases are discussed in our prior work \cite{Heidarifar_EtAl-PESGM2019}, which provides the means to diagnose them.
On the other hand, the BFM may generally admit multiple solutions, however, in practical networks \cite{Chiang_EtAl-CircSys1990}, with realistic resistance/reactance values and close to nominal substation voltage levels, the solution with practical voltage values is unique.
Notably, equations \eqref{RealBalance}--\eqref{VoltDrop} are linear, whereas \eqref{NonConvMan} represents the surface of a second order cone for each line $j\in\mathcal{J}$.
Relaxing \eqref{NonConvMan} to an inequality in the context of an optimization problem yields the aforementioned SOCP relaxation \cite{Farivar_EtAl-PowerSys2013}, whose solution, however, is not always guaranteed to be result in a binding inequality \cite{Farivar_EtAl-SmartGridCommConf2011}-\cite{Nick_EtAl-AutCont2018} and hence not satisfy \eqref{NonConvMan}.

\subsection{Riemannian Optimization Problem Formulations} \label{RiemForm}

In this subsection, we present two Riemannian optimization formulations for the LF problem, considering: \emph{(i)} a manifold represented by the full set of the LF equations \eqref{RealBalance}--\eqref{NonConvMan}, referred to as the BFM manifold, and \emph{(ii)} a manifold corresponding to \eqref{NonConvMan}, referred to as the Quadratic Equality (QE) manifold.

\subsubsection{BFM Manifold}

Consider the BFM, where we treat the real and reactive power injections, $p_j$ and $q_j$, as variables, and we add the following set of equations:
\begin{equation}
       p_j = \bar p_j, \qquad q_j = \bar q_j,   \qquad \forall j \in \mathcal{J}, \label{pqMismatch}\\
\end{equation}
with parameters $\bar p_j$ and $\bar q_j$ representing the values of the known injections.
Let $\mathbf{x}$ be the vector of variables, with $\mathbf{x} = \begin{pmatrix}
        \mathbf{u}^T \quad
        \mathbf{w}^T
    \end{pmatrix}^T$,
    and vectors $\mathbf{u}$ and $\mathbf{w}$ given by
    $\mathbf{u} =
    \begin{pmatrix}
        \mathbf{P}^T \,\, \mathbf{Q}^T \,\, \mathbf{l}^T \,\, \mathbf{v}^T 
    \end{pmatrix} ^T$, and  
    $\mathbf{w} = 
    \begin{pmatrix}
        \mathbf{p}^T \,\,
        \mathbf{q}^T
    \end{pmatrix} ^T$, respectively,
where $\mathbf{P}$, $\mathbf{Q}$, $\mathbf{l}$, $\mathbf{v}$, $\mathbf{p}$, and $\mathbf{q}$, are the vectors of variables $P_j$, $Q_j$, $l_j$, $v_j$, $p_j$, and $q_j$ respectively.
We define the BFM manifold as:
\begin{equation}
    \mathcal{M}_\textrm{BFM} = \{\mathbf{x} \in \mathbb{R}^{6J} |  \mathbf{F}_\textrm{BFM}(\mathbf{x})=\mathbf{0} \}, \label{BFM_Manifold}
\end{equation}
where $\mathbf{F}_\textrm{BFM}(\mathbf{x})=\mathbf{0}$ is the compact form of \eqref{RealBalance}-\eqref{NonConvMan}.
Naturally, the LF solution will be obtained when the values of variables $p_j$ and $q_j$ are equal to the values of the known injections, i.e., when \eqref{pqMismatch} holds.
Hence, the basic idea is to define an optimization problem, which penalizes the mismatches in \eqref{pqMismatch}, while ensuring that variables $\mathbf{x}$ remain on the BFM manifold.
This yields the following Riemannian optimization problem:
\begin{equation}
    \min_{\mathbf{x} \in \mathcal{M}_\textrm{BFM}} f_\textrm{BFM}(\mathbf{x})={\|\mathbf{w}-\mathbf{\bar w}\|}_2^2,
     \label{LF_as_Opt}
\end{equation}
where
    $\mathbf{ \bar w} = 
    \begin{pmatrix}
        \mathbf{\bar p}^T \,\,
        \mathbf{\bar q}^T
    \end{pmatrix} ^T$, 
is the vector of the known real and reactive power injections.
Given Assumption \ref{Feasibility}, the optimal solution of problem \eqref{LF_as_Opt} should be zero.
Denote by $\mathbf{x}_k$ the vector obtained at the $k$-th iteration.
Using \eqref{RiemGrad}, the Riemannian gradient associated with \eqref{LF_as_Opt} becomes:
\begin{equation}
   \textrm{grad}f_{\textrm{BFM}}(\mathbf{x}_{k}) = 2\mathbf{\Pi}_{\mathbf{x}_{k}}   \begin{pmatrix} \mathbf{0}_{4J\times1} \\ \mathbf{w}_k-\mathbf{\bar w}
   \end{pmatrix}.
    \label{OurRiemGradEntirePF}
\end{equation}
Using \eqref{Hessian} and the product rule for derivatives, the Riemannian Hessian is given by:
\begin{align}
    \textrm{hess}&f_{\textrm{BFM}}(\mathbf{x}_k)[\boldsymbol{\xi}_k]
    = \mathbf{\Pi}_{\mathbf{x}_k}
    \textrm{Dgrad}f_{\textrm{BFM}}(\mathbf{x}_k) [ \boldsymbol{\xi}_k],\nonumber \\
     &= \mathbf{\Pi}_{\mathbf{x}_k}  \left[2 \mathbf{\Pi}_{\mathbf{x}_k}  \begin{pmatrix}
    \mathbf{0}_{4J\times4J} \quad \mathbf{0}_{4J\times2J} \\
    \mathbf{0}_{2J\times4J} \quad \mathbf{I}_{2J\times2J} \\
    \end{pmatrix} + \mathbf{C}_{\mathbf{x}_k} \right] \boldsymbol{\xi}_k,
    \label{RiemHessProofEntirePFMan}
\end{align}
for all $\boldsymbol{\xi}_k \in \mathcal{T}_{\mathbf{x}_k}\mathcal{M}_\textrm{BFM}$, where $\mathbf{C}_{\mathbf{x}_k}$ is a matrix involving the derivatives of $\mathbf{\Pi}_{\mathbf{x}}$ at $\mathbf{x} = \mathbf{x}_k$.
It can be shown that the $n$-th column of $\mathbf{C}_{\mathbf{x}_k}$, denoted by $\mathbf{C}_{n,\mathbf{x}_k}$, equals:
\begin{equation}
    \mathbf{C}_{n, \mathbf{x}_k} = 2 \boldsymbol{\Gamma}_{n,\mathbf{x}_k}^{T}  \begin{pmatrix} \mathbf{0}_{4J\times1} \\ \mathbf{w}_k-\mathbf{\bar w}
   \end{pmatrix},
    \label{EqLixkEntirePFMan}
\end{equation}
where $\boldsymbol{\Gamma}_{n,\mathbf{x}_k}$ denotes the matrix to scalar derivative of $\mathbf{\Pi}_{\mathbf{x}}$ \emph{w.r.t.} the $n$-th element of $\mathbf{x}$ at $\mathbf{x} = \mathbf{x}_k$.

\subsubsection{QE Manifold}
Although the BFM manifold is a natural way to define the entire PF manifold in radial networks, inspired by the SOCP relaxation, we observe that \eqref{NonConvMan} can be written by completing the squares so as to resemble the equation of a sphere.
Hence, we define the QE manifold as follows:
\begin{equation}
    \mathcal{M}_\textrm{QE} = \{\mathbf{u} \in \mathbb{R}^{4J} |  \mathbf{F}_\textrm{QE}(\mathbf{u})=\mathbf{0} \}, \label{OurManDef}
\end{equation}
where $\mathbf{F}_\textrm{QE}(\mathbf{u})=0$ is the compact form of \eqref{NonConvMan}, and $\mathbf{u}$ is the vector defined above including variables $P_j$, $Q_j$, $l_j$, and $v_j$.
Notably, the QE manifold relies on vector $\mathbf{u}$ rather than the larger vector $\mathbf{x}$, because the real and reactive power injections, $\mathbf{p}$ and $\mathbf{q}$, respectively, are now considered parameters (instead of variables).
Then, considering the QE manifold, the LF solution requires that equations \eqref{RealBalance}--\eqref{VoltDrop} are satisfied.
Representing \eqref{RealBalance}--\eqref{VoltDrop} in a compact form as $\mathbf{Au}=\mathbf{b}$, where $\mathbf{A}$ is a $3J\times4J$ matrix, and $\mathbf{b}$ a $3J\times1$ vector that includes parameters $\mathbf{p}$ and $\mathbf{q}$, we define the following Riemannian optimization problem:
\begin{equation}
    \min_{\mathbf{u} \in  \mathcal{M}_\textrm{QE}} f_\textrm{QE}(\mathbf{u})=\|\mathbf{Au}-\mathbf{b}\|_{2}^{2},
    \label{OurRiemannOpt}
\end{equation}
whose objective function penalizes the mismatches in  \eqref{RealBalance}--\eqref{VoltDrop}.
Similarly to \eqref{LF_as_Opt}, given Assumption \ref{Feasibility}, the optimal solution of \eqref{OurRiemannOpt} should be zero.
The Riemannian gradient and Hessian associated with \eqref{OurRiemannOpt} are now given by:
\begin{align}
   \textrm{grad}f_\textrm{QE}(\mathbf{u}_{k}) &= 2\mathbf{\Pi}_{\mathbf{u}_{k}} \mathbf{A}^T (\mathbf{Au}_{k}-\mathbf{b}),
    \label{OurRiemGrad} \\
    \textrm{hess}f_\textrm{QE}(\mathbf{u}_k)[\boldsymbol{\zeta}_k]
    &= \mathbf{\Pi}_{\mathbf{u}_k}
    \textrm{Dgrad}f_\textrm{QE}(\mathbf{u}_k) [\boldsymbol{\zeta}_k],\nonumber \\
    & = \mathbf{\Pi}_{\mathbf{u}_k}\big(2 \mathbf{\Pi}_{\mathbf{u}_k} \mathbf{A}^{T}\mathbf{A} + \mathbf{{L}}_{\mathbf{u}_k} \big)\boldsymbol{\zeta}_k,
    \label{RiemHessProof}
\end{align}
where 
vector $\boldsymbol{\zeta}_k \in \mathbb{R}^{4J}$ is used instead of $\boldsymbol{\xi}_k \in \mathbb{R}^{6J}$ to avoid confusion (since we use variables $\mathbf{u}$ instead of $\mathbf{x}$), and $\mathbf{{L}}_{\mathbf{u}_k}$ is a matrix involving the derivative of $\mathbf{\Pi}_{\mathbf{u}}$ at $\mathbf{u} = \mathbf{u}_k$, whose $n$-th column, denoted by $\mathbf{{L}}_{n,\mathbf{u}_k}$, is expressed as:
\begin{equation}
    \mathbf{{L}}_{n, \mathbf{u}_k} = 2 \boldsymbol{\Lambda}_{n,\mathbf{u}_k}^{T}  \mathbf{A}^T (\mathbf{A} \mathbf{u}_{k}-\mathbf{b}),
    \label{EqLixk}
\end{equation}
where $\boldsymbol{\Lambda}_{n,\mathbf{u}_k}$ denotes the matrix to scalar derivative of $\mathbf{\Pi}_{\mathbf{u}}$ \emph{w.r.t.} the $n$-th element of $\mathbf{u}$ at $\mathbf{u} = \mathbf{u}_k$.

\subsection{Proposed Retractions} \label{ProposedRetraction}

In this subsection, we propose retraction methods that map a tangent vector to the BFM and the QE manifolds.
Recall that at the $k$-th iteration, using vector $\boldsymbol{\xi}_k$ without loss of generality, the retraction maps a point from the tangent space $\mathcal{T}_{\mathbf{x}_k}\mathcal{M}$, denoted by $\Tilde{\mathbf{x}}_k = \mathbf{x}_k + \alpha_k \boldsymbol{\xi}_k$, where $\alpha_k$ is the stepsize and $\boldsymbol{\xi}_k$ the search direction, to the manifold $\mathcal{M}$, to obtain the next point denoted by $\mathbf{x}_{k+1} = \mathcal{R}_{\mathbf{x}_k}(\alpha_{k}\boldsymbol{\xi}_k)$.
In what follows, we consider the variables associated with a single line, say line $j$.
To temporarily simplify notation, we drop the line subscripts of variables $P_j$, $Q_j$, and $l_j$ in \eqref{RealBalance}-\eqref{NonConvMan}, and we only show the iteration counter, i.e., $P_k$, $Q_k$, and $l_k$ for the $k$-th iteration.

\subsubsection{BFM Manifold Retraction}
By analogy to the Backward-Forward Sweep variant in \cite{Rajicic_EtAl-PowerDelivery1994}, retraction 
$\mathcal{R}^\textsc{BFM}$, 
involves a current update step followed by a voltage update step in a forward sweep manner starting from the root node, namely:
\begin{gather}
\underline{ \mathcal{R}^\textsc{BFM} } : \quad    P_{k+1} = 
     \Tilde{P}_{k}, \quad
     Q_{k+1} =
     \Tilde{Q}_{k},\quad
     l_{k+1} = 
     \frac{\Tilde{P}_{k}^2 + \Tilde{Q}_{k}^2}{{v}_{i,k+1}}, \quad 
      \nonumber \\
     v_{j, k+1} = 
     \frac{{v}_{i, k+1}}{a_j^2} 
     - 2(r\Tilde{P}_{k}+x\Tilde{Q}_{k}) 
     + a_j^2 (r^{2}+x^{2}){l}_{k+1}
     \label{RetBFM},
\end{gather}
with $\Tilde{P}_k$, $\Tilde{Q}_k$, $\Tilde{l}_k$, $\Tilde{v}_k$ denoting the values on the tangent space.
The retracted values for $p_{k+1}$ and $q_{k+1}$ are obtained by solving for the \emph{rhs} of \eqref{RealBalance} and \eqref{ReactiveBalance}, respectively, using the retracted values $P_{k+1}$, $Q_{k+1}$, $l_{k+1}$, and $v_{j,k+1}$ obtained in \eqref{RetBFM}.
\begin{lemma} \label{LemmaRetBFM}
$\mathcal{R}^\textsc{BFM}$ satisfies the conditions of Definition \ref{DefRetraction}.
\end{lemma}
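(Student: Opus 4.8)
The plan is to verify directly the two properties in Definition \ref{DefRetraction} — the centering condition $\mathcal{R}^\textsc{BFM}_{\mathbf{x}_k}(\mathbf{0}_{\mathbf{x}_k})=\mathbf{x}_k$ and the local rigidity condition $\frac{\mathrm d}{\mathrm d t}\mathcal{R}^\textsc{BFM}_{\mathbf{x}_k}(t\boldsymbol{\xi}_k)|_{t=0}=\boldsymbol{\xi}_k$ — by exploiting a single structural observation: every assignment in \eqref{RetBFM}, together with the subsequent recovery of $p_{k+1},q_{k+1}$ from the right-hand sides of \eqref{RealBalance}--\eqref{ReactiveBalance}, is literally one of the defining equations \eqref{RealBalance}--\eqref{NonConvMan} of $\mathcal{M}_\textrm{BFM}$ solved for a distinguished variable. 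Since the branch indexing makes $\mathcal{J}$ a tree rooted at node $0$ with $v_0$ fixed, and the retraction is evaluated node by node in a forward sweep, both properties will be proved by induction on the depth of a node in that tree, processing nodes in exactly the order the retraction uses. Smoothness of $\mathcal{R}^\textsc{BFM}$ on a neighborhood of $\mathbf{0}_{\mathbf{x}_k}$ is immediate, since all operations are rational and the only denominators, the retracted upstream voltages $v_{i,k+1}$, are bounded away from $0$ near $\mathbf{x}_k$ because nodal voltages are positive.

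For the centering condition I would set the tangent vector to $\mathbf{0}_{\mathbf{x}_k}$, so $\Tilde P_k=P_k$, $\Tilde Q_k=Q_k$, $\Tilde l_k=l_k$, $\Tilde v_k=v_k$. Because $\mathbf{x}_k\in\mathcal{M}_\textrm{BFM}$ it satisfies \eqref{NonConvMan} and \eqref{VoltDrop}; reading \eqref{NonConvMan} as $l=(P^2+Q^2)/v_i$ and \eqref{VoltDrop} as the voltage-drop expression, the right-hand sides of \eqref{RetBFM} return $P_k,Q_k,l_k,v_{j,k}$ provided the already-retracted parent voltage satisfies $v_{i,k+1}=v_{i,k}$. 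This holds by induction: at the root $v_{0,k+1}=v_0=v_{0,k}$, and for a deeper node it follows from the induction hypothesis applied to its parent. The recovered $p_{k+1},q_{k+1}$ then equal $p_k,q_k$ since \eqref{RealBalance}--\eqref{ReactiveBalance} hold at $\mathbf{x}_k$. Hence $\mathcal{R}^\textsc{BFM}_{\mathbf{x}_k}(\mathbf{0}_{\mathbf{x}_k})=\mathbf{x}_k$.

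For local rigidity I would write $\boldsymbol{\xi}_k$ componentwise as $(\xi^{P_j},\xi^{Q_j},\xi^{l_j},\xi^{v_j},\xi^{p_j},\xi^{q_j})_{j\in\mathcal{J}}$, substitute $\Tilde P_k=P_k+t\,\xi^{P_j}$ and so on into \eqref{RetBFM}, and differentiate each retracted component at $t=0$. The first two assignments give $\xi^{P_j},\xi^{Q_j}$ at once. For $l_{k+1}$ I apply the quotient rule, use the induction hypothesis $\frac{\mathrm d}{\mathrm d t}v_{i,k+1}|_{t=0}=\xi^{v_i}$ for the parent node (with $\xi^{v_0}=0$ as the base case), and simplify with the manifold identity $P_{j,k}^2+Q_{j,k}^2=v_{i,k}l_{j,k}$; the outcome is exactly the $j$-th row of the linearization of \eqref{NonConvMan}, which equals $\xi^{l_j}$ because $\boldsymbol{\xi}_k$ is a tangent vector, i.e. $\textrm D\mathbf{F}_\textrm{BFM}(\mathbf{x}_k)[\boldsymbol{\xi}_k]=\mathbf{0}$ by \eqref{TanSpEq}. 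Differentiating $v_{j,k+1}$ and inserting the derivatives just obtained reproduces the linearization of \eqref{VoltDrop}, which equals $\xi^{v_j}$ for the same reason, and differentiating the recovered $p_{k+1},q_{k+1}$ reproduces the linearizations of \eqref{RealBalance}--\eqref{ReactiveBalance}, hence equals $\xi^{p_j},\xi^{q_j}$. Assembling over all $j\in\mathcal{J}$ yields $\frac{\mathrm d}{\mathrm d t}\mathcal{R}^\textsc{BFM}_{\mathbf{x}_k}(t\boldsymbol{\xi}_k)|_{t=0}=\boldsymbol{\xi}_k$.

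The main obstacle I expect is bookkeeping rather than conceptual: correctly threading the induction through the tree so that, when differentiating $l_{k+1}$ and $v_{j,k+1}$, the derivative of the upstream voltage is already available, and invoking the manifold identity $P_{j,k}^2+Q_{j,k}^2=v_{i,k}l_{j,k}$ at precisely the step where the quotient-rule term must be recast as the matching term of the linearized cone equation. Once that substitution is in place, each check reduces to the tautology ``the derivative of (equation $E$ solved for variable $y$) is (the linearization of $E$ solved for $\xi^y$),'' and tangency of $\boldsymbol{\xi}_k$ closes the argument.
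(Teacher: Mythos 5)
Your proposal is correct and follows essentially the same route as the paper's proof: differentiate the $l$-update via the quotient rule, invoke the manifold identity $P_k^2+Q_k^2=l_k v_{i,k}$ together with the linearized cone equation (tangency of $\boldsymbol{\xi}_k$) to get $\zeta_k^l$, and observe that the remaining components follow because \eqref{RealBalance}--\eqref{VoltDrop} are linear. Your explicit forward-sweep induction on the tree and the smoothness remark merely formalize steps the paper leaves implicit (its derivative formula tacitly assumes the retracted upstream voltage has value $v_{i,k}$ and derivative $\zeta_k^{v}$ at $t=0$), so this is a slightly more careful write-up of the same argument.
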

The proof is included in the Appendix (Section \ref{AppRetBFM}).

\subsubsection{QE Manifold Retractions} 
We present two retractions that are inspired in part by the retraction on the unit sphere (Fig. \ref{fig:Retraction}) and the BFM geometry.
Inspired by the SOCP representation of \cite{Farivar_EtAl-PowerSys2013}, rearranging the completed square terms in \eqref{OurManDef}, we get 
$     4{P_k}^{2} + 4{Q_k}^{2} + ({v_{i,k}}-{l_k})^2 = ({v_{i,k}}+{l_k})^2$,  
yielding: 
\begin{equation}
     \Big(\frac{2{P_k}}{{v_{i,k}}+{l_k}}\Big)^{2} + \Big(\frac{2{Q_k}}{{v_{i,k}}+{l_k}}\Big)^{2}+ \Big(\frac{{v_{i,k}}-{l_k}}{{v_{i,k}}+{l_k}}\Big)^{2} = 1. \label{SphereShapedMan}
\end{equation}
Notably, \eqref{SphereShapedMan} represents a sphere in $\mathbb{R}^3$ enabling retraction by normalization \cite{Abs_EtAl-Book2008}.
Retraction $\mathcal{R}^\textsc{QE}_{1}$ uses an identity mapping for $\Tilde{v}_{j,k}$ and normalizes each term in parentheses in \eqref{SphereShapedMan}, which after some algebra yields:
\begin{gather}
\underline{\mathcal{R}^\textsc{QE}_{1}} : \qquad 
    v_{j,k+1} = \Tilde{v}_{j,k}, \quad
    l_{k+1}=
    \frac{D_k+\Tilde{l}_k-\Tilde{v}_{i,k}}{D_k-\Tilde{l}_k+\Tilde{v}_{i,k}} \Tilde{v}_{i,k},  \qquad \qquad \nonumber \\
    P_{k+1}=
    \frac{2\Tilde{P}_k \Tilde{v}_{i,k}}{D_k-\Tilde{l}_k+\Tilde{v}_{i,k}}, \quad
    Q_{k+1}=
    \frac{2\Tilde{Q}_k \Tilde{v}_{i,k}}{D_k-\Tilde{l}_k+\Tilde{v}_{i,k}}, \label{RetQE1}
\end{gather}
where 
$D_{k} = \sqrt{4(\Tilde{P}_{k})^2+4(\Tilde{Q}_{k})^2+(\Tilde{l}_{k}-\Tilde{v}_{i,k})^2}$.
\begin{lemma}
\label{LemmaRetQE1}
$\mathcal{R}^\textsc{QE}_{1}$  
satisfies the conditions of Definition \ref{DefRetraction}.
\end{lemma}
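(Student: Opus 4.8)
The plan is to verify directly the two defining properties of a retraction from Definition \ref{DefRetraction}, working line by line since $\mathcal{R}^\textsc{QE}_1$ decouples across branches. First I would check the centering condition $\mathcal{R}^\textsc{QE}_1(\mathbf{0}_{\mathbf{u}}) = \mathbf{u}$. Setting $\alpha_k = 0$ replaces the tilde-quantities by the base point values $P_k, Q_k, l_k, v_{i,k}$, which by construction lie on $\mathcal{M}_\textrm{QE}$, i.e. satisfy $v_{i,k} l_k = P_k^2 + Q_k^2$. I would substitute this identity into $D_k = \sqrt{4P_k^2 + 4Q_k^2 + (l_k - v_{i,k})^2} = \sqrt{4 v_{i,k} l_k + (l_k - v_{i,k})^2} = \sqrt{(l_k + v_{i,k})^2} = l_k + v_{i,k}$ (using that $l_k, v_{i,k} > 0$ at a physically meaningful point). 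Then the denominator $D_k - l_k + v_{i,k} = 2 v_{i,k}$, and plugging into \eqref{RetQE1} gives $l_{k+1} = \frac{(l_k + v_{i,k}) + l_k - v_{i,k}}{2 v_{i,k}} v_{i,k} = l_k$, $P_{k+1} = \frac{2 P_k v_{i,k}}{2 v_{i,k}} = P_k$, $Q_{k+1} = Q_k$, and $v_{j,k+1} = v_{j,k}$ trivially. Hence centering holds.

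Next I would verify local rigidity: $\frac{\mathrm d}{\mathrm d t} \mathcal{R}^\textsc{QE}_1(t \boldsymbol{\zeta})\big|_{t=0} = \boldsymbol{\zeta}$ for any tangent vector $\boldsymbol{\zeta} \in \mathcal{T}_{\mathbf{u}_k}\mathcal{M}_\textrm{QE}$. Write $\Tilde{P}(t) = P_k + t\zeta^P$, $\Tilde{Q}(t) = Q_k + t\zeta^Q$, $\Tilde{l}(t) = l_k + t\zeta^l$, $\Tilde{v}_i(t) = v_{i,k} + t\zeta^{v_i}$, and let $D(t)$ be the corresponding radius. The $v_j$-component is an identity map, so its derivative is $\zeta^{v_j}$ immediately. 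For the remaining three components I would differentiate the quotients in \eqref{RetQE1} at $t = 0$ using the quotient rule, needing $D(0) = l_k + v_{i,k}$ (from the centering computation) and $D'(0)$. Differentiating $D(t)^2 = 4\Tilde{P}^2 + 4\Tilde{Q}^2 + (\Tilde{l} - \Tilde{v}_i)^2$ gives $2 D(0) D'(0) = 8 P_k \zeta^P + 8 Q_k \zeta^Q + 2(l_k - v_{i,k})(\zeta^l - \zeta^{v_i})$. The key simplification is that the tangent-space constraint \eqref{TanSpEq} applied to $\mathbf{F}_\textrm{QE}$ — namely $\mathrm D\mathbf{F}_\textrm{QE}(\mathbf{u}_k)[\boldsymbol{\zeta}] = 0$, which for line $j$ reads $v_{i,k}\zeta^l + l_k \zeta^{v_i} = 2 P_k \zeta^P + 2 Q_k \zeta^Q$ — lets me eliminate the cross term and reduce $D'(0)$ to a clean expression, after which the quotient-rule derivatives for $P_{k+1}, Q_{k+1}, l_{k+1}$ should collapse to $\zeta^P, \zeta^Q, \zeta^l$ respectively. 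I expect everything to telescope precisely because normalization onto a sphere is a retraction and the change of coordinates in \eqref{SphereShapedMan} is a local diffeomorphism near a point with $v_{i,k} + l_k \neq 0$.

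The main obstacle will be the rigidity computation: keeping the bookkeeping straight through the quotient rule for three coupled components and correctly invoking the tangent-space identity to cancel the stray terms in $D'(0)$. A secondary subtlety is justifying that $D_k - \Tilde{l}_k + \Tilde{v}_{i,k} \neq 0$ in a neighborhood of the base point so the map is well-defined and smooth there; this follows from $D(0) - l_k + v_{i,k} = 2 v_{i,k} > 0$ and continuity. I would also remark that the construction only rearranges the exact sphere equation \eqref{SphereShapedMan}, so $\mathcal{R}^\textsc{QE}_1(\mathbf{u}_k)$ automatically satisfies $v_{i,k+1} l_{k+1} = P_{k+1}^2 + Q_{k+1}^2$, i.e. the output genuinely lands on $\mathcal{M}_\textrm{QE}$; this is the analogue of the "normalization stays on the sphere" fact and can be checked by direct substitution of \eqref{RetQE1} into \eqref{NonConvMan}.
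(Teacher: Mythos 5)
Your proposal is correct and follows essentially the same route as the paper's proof: verify centering by using the manifold relation $v_{i,k}l_k = P_k^2+Q_k^2$ to simplify $D_k$ to $l_k+v_{i,k}$, and verify local rigidity by differentiating at $t=0$ and invoking the tangent-space identity $2(P_k\zeta^P+Q_k\zeta^Q)=v_{i,k}\zeta^l+l_k\zeta^{v}$ (the paper's Eq.~\eqref{TangCurIter}) to collapse the derivative to $\boldsymbol{\zeta}$; your $D'(0)$ setup and the resulting cancellations do go through as you anticipate. The only cosmetic difference is that the paper handles $P_{k+1}$ by factoring it through the $l$-component and applying the product rule, whereas you apply the quotient rule to each component directly.
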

The proof is included in the Appendix (Section \ref{AppRetQE1}).

Retraction $\mathcal{R}^\textsc{QE}_{2}$ uses identity mappings for $\Tilde{v}_{j,k}$, $\Tilde{P}_k$, and $\Tilde{Q}_k$ and updates the value of $l_{k+1}$ satisfying \eqref{OurManDef}; it is given by:
\begin{gather}
\underline{\mathcal{R}^\textsc{QE}_{2}} : \qquad \qquad
    v_{j,k+1} = 
     \Tilde{v}_{j,k},\quad
     l_{k+1} = 
     \frac{\Tilde{P}_{k}^2 + \Tilde{Q}_{k}^2}{\Tilde{v}_{i,k}}, \qquad \quad \qquad \nonumber \\
     P_{k+1} = 
     \Tilde{P}_{k}, \quad
     Q_{k+1} =
     \Tilde{Q}_{k}.\label{RetQE2}
\end{gather}
\begin{lemma} \label{LemmaRetQE2}
$\mathcal{R}^\textsc{QE}_{2}$ satisfies the conditions of Definition \ref{DefRetraction}.
\end{lemma}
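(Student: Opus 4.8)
The plan is to check the two axioms of Definition \ref{DefRetraction} directly from the closed form \eqref{RetQE2}. Since \eqref{RetQE2} acts independently on the variable block of each line $j\in\mathcal{J}$ and the full retraction is the Cartesian product of the per-line maps, it suffices to work with one line $j$ with upstream node $i$: write the line-$j$ block of the current iterate $\mathbf{u}_k$ as $(P,Q,l,v_j,v_i)$ (suppressing the iteration index, as in the text), and let $(\zeta_P,\zeta_Q,\zeta_l,\zeta_{v_j},\zeta_{v_i})$ be the corresponding block of an arbitrary $\boldsymbol{\zeta}\in\mathcal{T}_{\mathbf{u}_k}\mathcal{M}_\textrm{QE}$; when $i$ is the slack node, $v_i=v_0$ is a fixed positive parameter and the component $\zeta_{v_i}$ is simply absent. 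Feeding $t\boldsymbol{\zeta}$ into \eqref{RetQE2} produces the curve $\gamma(t)=\mathcal{R}^\textsc{QE}_2(t\boldsymbol{\zeta})$ whose line-$j$ block is $P(t)=P+t\zeta_P$, $Q(t)=Q+t\zeta_Q$, $v_j(t)=v_j+t\zeta_{v_j}$, and $l(t)=\big((P+t\zeta_P)^2+(Q+t\zeta_Q)^2\big)/(v_i+t\zeta_{v_i})$. Since $v_i>0$, the denominator $v_i+t\zeta_{v_i}$ stays nonzero for $t$ near $0$, so $\gamma$ is smooth there; moreover $v_i(t)\,l(t)=P(t)^2+Q(t)^2$ by construction, so $\gamma(t)\in\mathcal{M}_\textrm{QE}$, which confirms that $\mathcal{R}^\textsc{QE}_2$ is a well-defined smooth map into the manifold.

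For the centering condition, set $t=0$: the first three components return $P$, $Q$, $v_j$ at once, and $l(0)=(P^2+Q^2)/v_i=l$ precisely because $\mathbf{u}_k\in\mathcal{M}_\textrm{QE}$, i.e., \eqref{NonConvMan} gives $P^2+Q^2=v_i l$; hence $\mathcal{R}^\textsc{QE}_2(\mathbf{0}_{\mathbf{u}_k})=\mathbf{u}_k$. For local rigidity, differentiate $\gamma$ at $t=0$: the affine components yield $\zeta_P$, $\zeta_Q$, $\zeta_{v_j}$ immediately, so only the derivative of $l(t)$ needs work. The quotient rule gives $l'(0)=\big((2P\zeta_P+2Q\zeta_Q)v_i-(P^2+Q^2)\zeta_{v_i}\big)/v_i^2$; substituting $P^2+Q^2=v_i l$ reduces this to $\big(2P\zeta_P+2Q\zeta_Q-l\,\zeta_{v_i}\big)/v_i$. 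Now the tangency of $\boldsymbol{\zeta}$ enters: the differential of \eqref{NonConvMan} along $\boldsymbol{\zeta}$, i.e.\ the condition \eqref{TanSpEq} specialized to this constraint, reads $v_i\zeta_l+l\,\zeta_{v_i}-2P\zeta_P-2Q\zeta_Q=0$, that is $2P\zeta_P+2Q\zeta_Q=v_i\zeta_l+l\,\zeta_{v_i}$; substituting this collapses the expression to $l'(0)=\zeta_l$. Reassembling the per-line blocks over all $j$ gives $\frac{\mathrm d}{\mathrm d t}\mathcal{R}^\textsc{QE}_2(t\boldsymbol{\zeta})\big|_{t=0}=\boldsymbol{\zeta}$, which is the local rigidity condition; together with centering this establishes the lemma.

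This proof is short and presents no real obstacle; the one step that must not be glossed over is the use of the tangent-space constraint \eqref{TanSpEq} for \eqref{NonConvMan} when computing $l'(0)$ — without it the derivative would not reduce to $\zeta_l$ and local rigidity would fail — together with the brief remark that division by $v_i+t\zeta_{v_i}$ is legitimate near $t=0$ (as $v_i>0$), which is exactly what guarantees both smoothness of $\mathcal{R}^\textsc{QE}_2$ and that $\gamma$ remains on $\mathcal{M}_\textrm{QE}$.
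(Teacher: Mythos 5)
Your proof is correct and follows essentially the same route as the paper: the linear components are immediate, and for the $l$-component you use the quotient rule together with the manifold relation $P^2+Q^2=v_i l$ and the tangent-space condition $2P\zeta_P+2Q\zeta_Q=v_i\zeta_l+l\,\zeta_{v_i}$, which is exactly the calculation the paper invokes (by referring back to the $l$-update argument in the proof of Lemma \ref{LemmaRetBFM}). Your added remarks on smoothness near $t=0$ and on centering requiring $\mathbf{u}_k\in\mathcal{M}_\textrm{QE}$ are fine elaborations of steps the paper labels straightforward.
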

The proof is included in the Appendix (Section \ref{AppRetQE2}).

\subsection{Proposed Initializations} \label{PropInitial}

The two proposed initializations are:

\textsc{Flat}: Flat start initialization in traditional LF methods (e.g., Newton-Raphson) sets $v_j$ equal to the substation voltage, and $l_j$, $P_j$ and $Q_j$ equal to zero.

\textsc{Warm}: Warm start initialization solves the LinDistFlow equations \cite{Baran_EtAl-PowerDelivary1989CapSizing, Baran_EtAl-PowerDelivery1989_Reconfiguration}: 
\begin{equation}
     \sum_{j'\in \mathcal{J'}(j)}{P_{j'}} - P_{j} + G_{j}v_{j}= p_j,  \ \ \ \forall j \in \mathcal{J}, \label{DistFlow1}   
\end{equation}
\begin{equation}
 \sum_{j' \in \mathcal{J'}(j)}{Q_{j'}} - Q_{j} - B_{j}v_{j} = q_j,  \ \ \ \forall j \in \mathcal{J}, \label{DistFlow2}   
\end{equation}
\begin{equation}
v_j - \frac{v_i}{a_j^2} + 2(r_{j}P_{j}+x_{j}Q_{j}) = 0, \forall j \in \mathcal{J}, \label{DistFlow3}    
\end{equation}    
i.e., it solves \eqref{RealBalance}--\eqref{VoltDrop}, assuming zero currents, to obtain the initial values for $v_j$, $P_j$ and $Q_j$.

For both \textsc{Flat} and \textsc{Warm}, the initial points, $\mathbf{x}_0$ and $\mathbf{u}_0$, for the BFM and QE manifolds, are obtained by applying retractions $\mathcal{R}^\textsc{BFM}$ and $\mathcal{R}^\textsc{QE}_{2}$, respectively.

\section{Proposed Approximate Newton Method}
\label{SecSearchMethods}

In this section, we present the proposed LF solution method, which is shown to belong to the category of Riemannian approximate Newton methods, and its application to the BFM and QE manifolds.
Without loss of generality, we present the method using the variables represented by vector $\mathbf{x}$, which refers to the BFM manifold, noting that vector $\mathbf{u}$, which refers to the QE manifold, is included in vector $\mathbf{x}$.  
Function $f(\mathbf{x})$ represents the mismatches during the optimization process.

Consider the $k$-th iteration, at which we are found at point $\mathbf{x}_k$ (on the manifold), which does not attain the minimum of the Riemannian optimization problem, i.e., $f(\mathbf{x}_k)$ is not zero; if it were, then we would have reached the LF solution, as $\mathbf{x}_k$ would be on the manifold and all equality constraints would be satisfied (zero mismatches).
We aim at finding a descent direction $\boldsymbol{\xi}_k$ on the tangent space $\mathcal{T}_{\mathbf{x}_k}\mathcal{M}$, to move from point $\mathbf{x}_k$ to point $\Tilde{\mathbf{x}}_k$, and then apply a retraction.
Instead of employing the Riemannian gradient, as in Algorithm \ref{GradDecAlg},
we obtain direction $\boldsymbol{\xi}_k$, so that the new point $\Tilde{\mathbf{x}}_k=\mathbf{x}_k+\boldsymbol{\xi}_k$ (assuming a stepsize equal to 1), which lies on the tangent space,
${\boldsymbol{\xi}_k \in  \mathcal{T}_{\mathbf{x}_k}\mathcal{M}}$, also minimizes the mismatches, i.e.,
\begin{equation}  \label{DirXi}
 f(\Tilde{\mathbf{x}}_k) = f(\mathbf{x}_k + \boldsymbol{\xi}_k) = 0   
\end{equation}
The LF solution algorithm employing the proposed approximate Newton method is presented in Algorithm \ref{ProposedAlg}.

\begin{algorithm}
\SetAlgoLined
\KwIn{$\mathcal{M}$, $f: \mathcal{M}\mapsto\mathbb{R}$, $\mathcal{R}_\mathbf{x}: \mathcal{T}_\mathbf{x}\mathcal{M} \mapsto\mathcal{M}$, scalars $\bar{\alpha} > 0, \beta,\sigma \in (0,1)$, and $\epsilon > 0$ (small).}
\KwOut{A critical point $\mathbf{x}^{*}\in \mathcal{M}$ of $f: \mathcal{M} \mapsto\mathbb{R}$}
\textbf{Initialization:} $k=0$ and $\mathbf{x}_{0} \in \mathcal{M}$ \;
 \While{$\|\mathrm{grad}f(\mathbf{x}_{k})\|_{2} > \epsilon$}
 {
  Find direction $\boldsymbol{\xi}_k \in  \mathcal{T}_{\mathbf{x}_k} \mathcal{M}$ satisfying \eqref{DirXi}\;
    $\mathbf{x}_{k+1} = \mathcal{R}_{\mathbf{x}_{k}}(\beta^{m}\bar{\alpha}\boldsymbol{\xi}_{k})$ satisfying Armijo rule \eqref{Armijo}\;
  $k = k+1$\;
 }
 \caption{Proposed Approximate Newton Method}
 \label{ProposedAlg}
\end{algorithm}

In what follows, we exemplify the algorithm on the BFM and the QE manifold.

\subsection{Application to the BFM Manifold}

Employing the BFM manifold, we obtain direction $\boldsymbol{\xi}_k = (\boldsymbol{\zeta}_k^T \  \boldsymbol{\eta}^T_k)^T$, where $\boldsymbol{\zeta}$ and $\boldsymbol{\eta}$ are search directions along $\textbf{u}$ and $\textbf{w}$ variables, respectively, by solving the following system of linear equations (with variables $\boldsymbol{\xi}_k$):
\begin{gather}
    \boldsymbol{\xi}_k \in \mathcal{T}_{\mathbf{x}_k}\mathcal{M}_\textrm{BFM}, \qquad
    \mathbf{w}_k+\boldsymbol{\eta}_k = \mathbf{\bar w}, \label{wmis}
\end{gather}
where the first set represents equations \eqref{RealBalance}--\eqref{VoltDrop} with variables $\boldsymbol{\xi}_k$ (instead of $\mathbf{x}_k$) and a linear approximation of \eqref{NonConvMan} ---which is presented in \eqref{TangCurIter} in the Appendix--- and the second set of equations is directly obtained by applying \eqref{DirXi} to $f_\textrm{BFM}(\mathbf{x}_k + \mathbf{\boldsymbol{\xi}_k}) = \|\mathbf{w}_k+\boldsymbol{\eta}_k - \mathbf{\bar w}\|_{2}^{2}$ given by \eqref{LF_as_Opt}.
Hence, one can think of the solution of \eqref{wmis}, assuming linearly independent rows, as the solution of minimizing $f_\textrm{BFM}$ over the tangent space around $\mathbf{x}_k$, expressed as     $\min_{\boldsymbol{\xi}_k \in  \mathcal{T}_{\mathbf{x}_k}\mathcal{M}_\textrm{BFM}} f_\textrm{BFM}(\mathbf{x}_k + \mathbf{\boldsymbol{\xi}_k})$,
where the decision variables are $\boldsymbol{\xi}_k$.

The following proposition summarizes the properties of the proposed LF solution method applied to the BFM manifold.
\begin{proposition}
\label{Prop1}
Algorithm \ref{ProposedAlg}, applied to the BFM manifold, is a Riemannian approximate Newton method, with guaranteed descent and local superlinear convergence rate.
\end{proposition}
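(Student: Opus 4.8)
The plan is to establish the three claims of Proposition~\ref{Prop1} in turn: (i) that the direction $\boldsymbol{\xi}_k$ generated by \eqref{wmis} solves a modified Newton equation of the form \eqref{ApproximateJacob}, so Algorithm~\ref{ProposedAlg} applied to $\mathcal{M}_{\textrm{BFM}}$ is a Riemannian approximate Newton method; (ii) that $\boldsymbol{\xi}_k$ is a descent direction, so that the Armijo step \eqref{Armijo} along the retraction $\mathcal{R}^{\textsc{BFM}}$ (valid by Lemma~\ref{LemmaRetBFM}) gives a monotonic decrease of $f_{\textrm{BFM}}$; and (iii) that the associated approximation error $\mathbf{E}_k$ tends to $\mathbf{0}$ at the solution, so that local superlinear convergence follows from the theory of Riemannian approximate Newton methods in \cite[Sec.~8.2]{Abs_EtAl-Book2008}.

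For (i), I would abbreviate $\mathbf{S}:=\mathrm{diag}(\mathbf{0}_{4J\times4J},\mathbf{I}_{2J\times2J})$, so that \eqref{OurRiemGradEntirePF} reads $\mathrm{grad}f_{\textrm{BFM}}(\mathbf{x}_k)=2\mathbf{\Pi}_{\mathbf{x}_k}\big(\mathbf{0}_{4J\times1}^T\ (\mathbf{w}_k-\bar{\mathbf{w}})^T\big)^T$, and, using that $\mathbf{\Pi}_{\mathbf{x}_k}$ is an orthogonal projector ($\mathbf{\Pi}_{\mathbf{x}_k}=\mathbf{\Pi}_{\mathbf{x}_k}^T=\mathbf{\Pi}_{\mathbf{x}_k}^2$), \eqref{RiemHessProofEntirePFMan} gives $\mathbf{J}(\mathbf{x}_k)=\mathrm{hess}f_{\textrm{BFM}}(\mathbf{x}_k)=2\mathbf{\Pi}_{\mathbf{x}_k}\mathbf{S}+\mathbf{\Pi}_{\mathbf{x}_k}\mathbf{C}_{\mathbf{x}_k}$. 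I would then set $\mathbf{E}_k:=-\mathbf{\Pi}_{\mathbf{x}_k}\mathbf{C}_{\mathbf{x}_k}$, so that $\mathbf{J}(\mathbf{x}_k)+\mathbf{E}_k=2\mathbf{\Pi}_{\mathbf{x}_k}\mathbf{S}$, and check that the $\boldsymbol{\xi}_k=(\boldsymbol{\zeta}_k^T\ \boldsymbol{\eta}_k^T)^T$ produced by \eqref{wmis} satisfies \eqref{ApproximateJacob}: since $\boldsymbol{\xi}_k\in\mathcal{T}_{\mathbf{x}_k}\mathcal{M}_{\textrm{BFM}}$ and $\boldsymbol{\eta}_k=\bar{\mathbf{w}}-\mathbf{w}_k$, we have $\mathbf{S}\boldsymbol{\xi}_k=\big(\mathbf{0}_{4J\times1}^T\ (\bar{\mathbf{w}}-\mathbf{w}_k)^T\big)^T$, whence $\big[\mathbf{J}(\mathbf{x}_k)+\mathbf{E}_k\big]\boldsymbol{\xi}_k=2\mathbf{\Pi}_{\mathbf{x}_k}\mathbf{S}\boldsymbol{\xi}_k=-2\mathbf{\Pi}_{\mathbf{x}_k}\big(\mathbf{0}_{4J\times1}^T\ (\mathbf{w}_k-\bar{\mathbf{w}})^T\big)^T=-\mathrm{grad}f_{\textrm{BFM}}(\mathbf{x}_k)$. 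The linear-independence assumption noted after \eqref{wmis}, i.e.\ invertibility of $2\mathbf{\Pi}_{\mathbf{x}_k}\mathbf{S}$ restricted to the tangent space, makes $\boldsymbol{\xi}_k$ well defined. This is the step I expect to be the main obstacle: carefully absorbing the outer projector in \eqref{RiemHessProofEntirePFMan} to expose the clean term $2\mathbf{\Pi}_{\mathbf{x}_k}\mathbf{S}$ and confirming that the only term the algorithm implicitly discards is $\mathbf{\Pi}_{\mathbf{x}_k}\mathbf{C}_{\mathbf{x}_k}$, which must then be shown to admit a sufficiently small bound.

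For (ii), using that $\mathbf{\Pi}_{\mathbf{x}_k}$ is symmetric and acts as the identity on $\boldsymbol{\xi}_k\in\mathcal{T}_{\mathbf{x}_k}\mathcal{M}_{\textrm{BFM}}$, I would compute $\langle\mathrm{grad}f_{\textrm{BFM}}(\mathbf{x}_k),\boldsymbol{\xi}_k\rangle=2\langle\big(\mathbf{0}_{4J\times1}^T\ (\mathbf{w}_k-\bar{\mathbf{w}})^T\big)^T,\boldsymbol{\xi}_k\rangle=2\langle\mathbf{w}_k-\bar{\mathbf{w}},\boldsymbol{\eta}_k\rangle=-2\|\mathbf{w}_k-\bar{\mathbf{w}}\|_2^2=-2f_{\textrm{BFM}}(\mathbf{x}_k)$, which is strictly negative whenever $\mathbf{x}_k$ is not an LF solution, so \eqref{descentDirection} holds; the Armijo rule \eqref{Armijo} along $\mathcal{R}^{\textsc{BFM}}$ then returns $\mathbf{x}_{k+1}$ with $f_{\textrm{BFM}}(\mathbf{x}_{k+1})<f_{\textrm{BFM}}(\mathbf{x}_k)$, i.e.\ guaranteed descent at every iteration.

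For (iii), I would observe from \eqref{EqLixkEntirePFMan} that every column of $\mathbf{C}_{\mathbf{x}_k}$ equals $2\boldsymbol{\Gamma}_{n,\mathbf{x}_k}^T\big(\mathbf{0}_{4J\times1}^T\ (\mathbf{w}_k-\bar{\mathbf{w}})^T\big)^T$, a bounded linear image of the injection mismatch $\mathbf{w}_k-\bar{\mathbf{w}}$; hence $\mathbf{C}_{\mathbf{x}^*}=\mathbf{0}$ at the LF solution $\mathbf{x}^*$ (where $\mathbf{w}^*=\bar{\mathbf{w}}$ by Assumption~\ref{Feasibility}), and by smoothness $\|\mathbf{E}_k\|=\|\mathbf{\Pi}_{\mathbf{x}_k}\mathbf{C}_{\mathbf{x}_k}\|=O(\|\mathbf{w}_k-\bar{\mathbf{w}}\|_2)=O(\mathrm{dist}(\mathbf{x}_k,\mathbf{x}^*))\to 0$ as $\mathbf{x}_k\to\mathbf{x}^*$. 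In particular the approximate Jacobian at $\mathbf{x}^*$ coincides with the exact Hessian $\mathbf{J}(\mathbf{x}^*)=2\mathbf{\Pi}_{\mathbf{x}^*}\mathbf{S}$, which, by the same computation as in (ii), is symmetric positive semidefinite on $\mathcal{T}_{\mathbf{x}^*}\mathcal{M}_{\textrm{BFM}}$ and is nonsingular precisely when the LF solution is locally unique (the regularity condition discussed after Assumption~\ref{Feasibility}); I would flag this nonsingularity, which is not implied by Assumption~\ref{Feasibility} alone, as a hypothesis needed here. With $\mathbf{E}_k\to\mathbf{0}$ and $\mathbf{J}(\mathbf{x}^*)$ nonsingular, the local superlinear convergence analysis of Riemannian approximate Newton methods \cite[Sec.~8.2]{Abs_EtAl-Book2008} applies, which completes the argument.
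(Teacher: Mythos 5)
Your proposal is correct and follows essentially the same route as the paper's proof: the descent claim is established exactly as in the paper's auxiliary lemma (symmetry and idempotency of $\mathbf{\Pi}_{\mathbf{x}_k}$, $\mathbf{\Pi}_{\mathbf{x}_k}\boldsymbol{\xi}_k=\boldsymbol{\xi}_k$, and \eqref{wmis} giving $\langle\mathrm{grad}f_\textrm{BFM}(\mathbf{x}_k),\boldsymbol{\xi}_k\rangle=-2\|\boldsymbol{\eta}_k\|_2^2<0$, combined with Lemma \ref{LemmaRetBFM} and the Armijo rule), and your identification of \eqref{wmis} with \eqref{ApproximateJacob} uses the same splitting $\mathrm{hess}f_\textrm{BFM}(\mathbf{x}_k)=2\mathbf{\Pi}_{\mathbf{x}_k}\mathbf{S}+\mathbf{\Pi}_{\mathbf{x}_k}\mathbf{C}_{\mathbf{x}_k}$ with $\mathbf{E}_k=-\mathbf{\Pi}_{\mathbf{x}_k}\mathbf{C}_{\mathbf{x}_k}$; the paper reaches the same identity by multiplying \eqref{wmis} by $2\mathbf{\Pi}_{\mathbf{x}_k}$, which is equivalent to your direct evaluation of $[\mathbf{J}(\mathbf{x}_k)+\mathbf{E}_k]\boldsymbol{\xi}_k$. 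The only divergence is the final, superlinear-convergence step: the paper verifies the hypothesis of \cite[Thm.~8.2.1]{Abs_EtAl-Book2008} through the explicit columnwise bound from \eqref{EqLixkEntirePFMan}, namely $\|\mathbf{E}_k\|_2\le\gamma_1\|\mathrm{grad}f_\textrm{BFM}(\mathbf{x}_k)\|_2$ with $\gamma_1=\sum_n\|\boldsymbol{\Gamma}_{n,\mathbf{x}_k}\|_2$, whereas you show only $\|\mathbf{E}_k\|_2=O(\|\mathbf{w}_k-\bar{\mathbf{w}}\|_2)\to0$ and then close the argument via nonsingularity of the limiting Hessian; since the cited theorem controls the error by the gradient (the \emph{projected} mismatch, which can be smaller than the raw mismatch), your mismatch-based bound indeed needs the nondegeneracy you invoke to finish, so your version carries an extra hypothesis that the paper's explicit constant is meant to avoid. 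On the other hand, your flagging of the nonsingularity of $\mathrm{hess}f_\textrm{BFM}$ at the limit point—required by \cite[Thm.~8.2.1]{Abs_EtAl-Book2008} but left implicit in the paper (it is only loosely connected to the uniqueness discussion after Assumption \ref{Feasibility})—is a point of added rigor rather than a defect.
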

The proof is included in the Appendix (Section \ref{ProofProp1}).

\subsection{Application to the QE manifold} 

Similarly to \eqref{wmis}, we obtain direction $\boldsymbol{\zeta}_k$ by solving the following system of linear equations:
\begin{equation}
\boldsymbol{\zeta}_k \in \mathcal{T}_{\mathbf{u}_k}\mathcal{M}_\textrm{QE}, \qquad
\mathbf{A}(\mathbf{u}_k + \boldsymbol{\zeta}_k) = \mathbf{b},
 \label{QE_xiTang}
\end{equation}
where the second set of equations is directly obtained by applying \eqref{DirXi} to $f_\textrm{QE}(\mathbf{u}_k + \mathbf{\boldsymbol{\zeta}_k}) = \|\mathbf{A}(\mathbf{u}_k + \boldsymbol{\zeta}_k) - \mathbf{b}\|_{2}^{2}$ given by \eqref{OurRiemannOpt}.
The solution of \eqref{QE_xiTang}, assuming linearly independent rows, can be viewed as the solution of minimizing $f_\textrm{QE}$ over the tangent space around $\mathbf{u}_k$, expressed as 
$\min_{\boldsymbol{\zeta}_k \in  \mathcal{T}_{\mathbf{u}_k}\mathcal{M}_\textrm{QE}} f_\textrm{QE}(\mathbf{u}_k + \mathbf{\boldsymbol{\zeta}_k})$,
where the decision variables are $\boldsymbol{\zeta}_k$. 
The following proposition essentially states that the properties of Proposition \ref{Prop1} carry over to the QE manifold.
\begin{proposition}
\label{Prop2}
Algorithm \ref{ProposedAlg}, applied to the QE manifold, is a Riemannian approximate Newton method, with guaranteed descent and local superlinear convergence rate.
\end{proposition}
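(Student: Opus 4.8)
The plan is to follow the same route as the proof of Proposition~\ref{Prop1}, transplanting each step to the QE manifold and substituting the gradient and Hessian expressions \eqref{OurRiemGrad}--\eqref{EqLixk} for their BFM counterparts. First I would show that the direction $\boldsymbol{\zeta}_k$ produced by \eqref{QE_xiTang} solves the approximate Newton equation \eqref{ApproximateJacob} with $\mathbf{J}(\mathbf{u}_k)=\textrm{hess}f_\textrm{QE}(\mathbf{u}_k)$ and $\mathbf{E}_k=-\mathbf{\Pi}_{\mathbf{u}_k}\mathbf{L}_{\mathbf{u}_k}$. Indeed, by \eqref{RiemHessProof} and the idempotence $\mathbf{\Pi}_{\mathbf{u}_k}^2=\mathbf{\Pi}_{\mathbf{u}_k}$, the left-hand side of \eqref{ApproximateJacob} collapses to $2\mathbf{\Pi}_{\mathbf{u}_k}\mathbf{A}^T\mathbf{A}\boldsymbol{\zeta}_k$; multiplying the residual-zeroing equation $\mathbf{A}(\mathbf{u}_k+\boldsymbol{\zeta}_k)=\mathbf{b}$ of \eqref{QE_xiTang} on the left by $2\mathbf{\Pi}_{\mathbf{u}_k}\mathbf{A}^T$ and comparing with \eqref{OurRiemGrad} shows this equals $-\textrm{grad}f_\textrm{QE}(\mathbf{u}_k)$, while the tangency constraint of \eqref{QE_xiTang} places $\boldsymbol{\zeta}_k$ in $\mathcal{T}_{\mathbf{u}_k}\mathcal{M}_\textrm{QE}$. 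I would also record that, under the full-rank (linearly independent rows) hypothesis already invoked after \eqref{QE_xiTang}, the linear system \eqref{QE_xiTang} has a unique solution, so the step is well defined.

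Next I would establish the two advertised properties. For \emph{descent}, since $\boldsymbol{\zeta}_k\in\mathcal{T}_{\mathbf{u}_k}\mathcal{M}_\textrm{QE}$ we have $\mathbf{\Pi}_{\mathbf{u}_k}\boldsymbol{\zeta}_k=\boldsymbol{\zeta}_k$, so by \eqref{OurRiemGrad} and $\mathbf{A}\boldsymbol{\zeta}_k=\mathbf{b}-\mathbf{A}\mathbf{u}_k$,
\begin{equation}
\langle \textrm{grad}f_\textrm{QE}(\mathbf{u}_k),\boldsymbol{\zeta}_k\rangle = 2(\mathbf{A}\mathbf{u}_k-\mathbf{b})^{T}\mathbf{A}\boldsymbol{\zeta}_k = -2\|\mathbf{A}\mathbf{u}_k-\mathbf{b}\|_2^{2} = -2 f_\textrm{QE}(\mathbf{u}_k),
\end{equation}
which is strictly negative whenever $\mathbf{u}_k$ is not already a solution; hence \eqref{descentDirection} holds and the Armijo line search in Algorithm~\ref{ProposedAlg} is well posed and yields a monotone decrease of $f_\textrm{QE}$ (as a by-product, any critical point of $f_\textrm{QE}$ on $\mathcal{M}_\textrm{QE}$ has zero objective, i.e., satisfies the LF equations). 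For \emph{local superlinear convergence}, I would invoke the approximate-Newton theory of \cite[Sec.~8.2]{Abs_EtAl-Book2008}: it suffices that the exact Riemannian Hessian be nonsingular on $\mathcal{T}_{\mathbf{u}^*}\mathcal{M}_\textrm{QE}$ and that $\|\mathbf{E}_k\|\to 0$ as $\mathbf{u}_k\to\mathbf{u}^*$. The latter is immediate from \eqref{EqLixk}, since every column of $\mathbf{L}_{\mathbf{u}_k}$ is proportional to the residual $\mathbf{A}^T(\mathbf{A}\mathbf{u}_k-\mathbf{b})$, which vanishes at $\mathbf{u}^*$; consequently $\mathbf{L}_{\mathbf{u}^*}=\mathbf{0}$ and \eqref{RiemHessProof} gives $\textrm{hess}f_\textrm{QE}(\mathbf{u}^*)[\boldsymbol{\zeta}]=2\mathbf{\Pi}_{\mathbf{u}^*}\mathbf{A}^T\mathbf{A}\boldsymbol{\zeta}$, whose nonsingularity on the tangent space is precisely the full-rank condition assumed for \eqref{QE_xiTang}.

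Finally I would remark that, because Algorithm~\ref{ProposedAlg} and the structural objects \eqref{OurRiemGrad}--\eqref{EqLixk} for the QE manifold are exact analogues of those used in Proposition~\ref{Prop1}, the conclusions carry over verbatim. The main obstacle, as in the BFM case, is not the identification with an approximate Newton step --- that is a short algebraic check --- but the quantitative estimate needed to preserve superlinear convergence: showing that $\mathbf{E}_k=-\mathbf{\Pi}_{\mathbf{u}_k}\mathbf{L}_{\mathbf{u}_k}$ decays fast enough relative to $\textrm{dist}(\mathbf{u}_k,\mathbf{u}^*)$, which hinges on the Lipschitz behaviour of the projection $\mathbf{\Pi}_{\mathbf{u}}$ and of the residual near $\mathbf{u}^*$, together with verifying nondegeneracy of the restricted Hessian; a secondary point is checking that the linearized tangency constraint in \eqref{QE_xiTang} is consistent with the residual-zeroing equation, so that the combined linear system indeed admits a unique solution in a neighbourhood of $\mathbf{u}^*$.
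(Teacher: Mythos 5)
Your proposal is correct and follows essentially the same route as the paper's proof: it recasts \eqref{QE_xiTang} as the approximate Newton equation \eqref{ApproximateJacob} by left-multiplying with $2\mathbf{\Pi}_{\mathbf{u}_k}\mathbf{A}^T$ and identifying $\mathbf{E}_k=-\mathbf{\Pi}_{\mathbf{u}_k}\mathbf{L}_{\mathbf{u}_k}$, establishes descent from $\langle\mathrm{grad}f_\textrm{QE}(\mathbf{u}_k),\boldsymbol{\zeta}_k\rangle=-2\|\mathbf{A}\boldsymbol{\zeta}_k\|_2^2<0$ together with the Armijo rule and the retraction lemmas, and controls $\mathbf{E}_k$ via the fact that every column of $\mathbf{L}_{\mathbf{u}_k}$ is proportional to the residual $\mathbf{A}^T(\mathbf{A}\mathbf{u}_k-\mathbf{b})$. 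The only cosmetic difference is that the paper verifies the hypothesis of \cite[Thm.~8.2.1]{Abs_EtAl-Book2008} through the explicit bound $\|\mathbf{E}_k\|_2\le\gamma_2\,\|\mathrm{grad}f_\textrm{QE}(\mathbf{u}_k)\|_2$ with $\gamma_2=\sum_n\|\boldsymbol{\Lambda}_{n,\mathbf{u}_k}\|_2$, whereas you phrase the same structural fact as $\mathbf{E}_k\to\mathbf{0}$ near $\mathbf{u}^*$ plus nonsingularity of the limiting Hessian.
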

The proof is included in the Appendix (Section \ref{ProofProp2}).

Although the proposed method is an exact LF solution method, executing it only for one iteration yields an approximate LF solution.
In fact, as we will show later, the numerical comparisons illustrate that the first iteration employing \textsc{Warm} and $\mathcal{R}^\textsc{QE}_{2}$ yields a higher quality LF solution compared with both LinDistFlow and the linear approximant proposed in \cite{Bolognani_EtAl-AllertonConf2015}. 
Lastly, the following Corollary relates the two initializations (\textsc{Flat} and \textsc{Warm}) when combined with retraction $\mathcal{R}^\textsc{QE}_{2}$. 
\begin{corollary} \label{cor1}
The first iteration of Algorithm \ref{ProposedAlg}, employing $\mathcal{R}^\textsc{QE}_{2}$ and \textsc{Flat}, yields the \textsc{Warm} initial point. 
\end{corollary}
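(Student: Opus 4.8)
The plan is to peel back the three objects in play --- the \textsc{Flat} initial point, a single iteration of Algorithm~\ref{ProposedAlg} on the QE manifold, and the \textsc{Warm} initial point --- and verify that composing the first two reproduces the third. First I would write the \textsc{Flat} point $\mathbf{u}_0$ out explicitly: by definition every $v_j$ equals the fixed, strictly positive substation voltage $v_0$, while $P_j=Q_j=l_j=0$ for all $j\in\mathcal{J}$. Each equation of \eqref{NonConvMan} then reads $v_0\cdot 0 = 0^2+0^2$, so this point already lies on $\mathcal{M}_\textrm{QE}$; moreover $\mathcal{R}^\textsc{QE}_{2}$ only overwrites the current coordinates (cf.\ \eqref{RetQE2}) and leaves $0/v_0=0$, so it returns $\mathbf{u}_0$ unchanged, confirming that $\mathbf{u}_0$ is the algorithm's starting point as prescribed in Subsection~\ref{PropInitial}.

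Next I would characterize the tangent space $\mathcal{T}_{\mathbf{u}_0}\mathcal{M}_\textrm{QE}$ through \eqref{TanSpEq}. Differentiating the $j$-th component $v_i l_j - P_j^2 - Q_j^2$ of $\mathbf{F}_\textrm{QE}$ at $\mathbf{u}_0$, every term carrying a factor of $l_j$, $P_j$, or $Q_j$ vanishes, so the condition reduces to $v_0$ times the $l_j$-component of $\boldsymbol{\zeta}$ being zero; since $v_0>0$, every $\boldsymbol{\zeta}_0\in\mathcal{T}_{\mathbf{u}_0}\mathcal{M}_\textrm{QE}$ has vanishing current coordinates, with its $P$-, $Q$-, and $v$-coordinates free. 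This is the crux of the argument: at the flat point the tangency condition is exactly ``zero line currents.'' Now let $\tilde{\mathbf{u}}_0 := \mathbf{u}_0 + \boldsymbol{\zeta}_0$ with $\boldsymbol{\zeta}_0$ the direction solving \eqref{QE_xiTang}. Its current coordinates are $0+0=0$, and it satisfies $\mathbf{A}\tilde{\mathbf{u}}_0 = \mathbf{b}$, i.e.\ \eqref{RealBalance}--\eqref{VoltDrop}. Substituting the zero currents into \eqref{RealBalance}--\eqref{VoltDrop} deletes precisely the terms $a_j^2 r_j l_j$, $a_j^2 x_j l_j$, and $a_j^2(r_j^2+x_j^2)l_j$, so these equations reduce term-for-term to the LinDistFlow system \eqref{DistFlow1}--\eqref{DistFlow3} in the coordinates of $\tilde{\mathbf{u}}_0$. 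Under the linear-independence (well-posedness) assumption invoked for \eqref{QE_xiTang}, the system is uniquely solvable, so $\tilde{\mathbf{u}}_0$ has its $v$-, $P$-, $Q$-coordinates equal to the LinDistFlow solution and its current coordinates equal to zero.

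To finish I would apply $\mathcal{R}^\textsc{QE}_{2}$ (with the unit Newton step) to $\tilde{\mathbf{u}}_0$ and compare. By \eqref{RetQE2}, $\mathcal{R}^\textsc{QE}_{2}$ keeps $v$, $P$, $Q$ and resets $l_j = (P_j^2+Q_j^2)/v_i$; applied to $\tilde{\mathbf{u}}_0$ from the previous step it returns the point $\mathbf{u}_1$ whose $v$-, $P$-, $Q$-coordinates are the LinDistFlow solution and whose currents are $(P_j^2+Q_j^2)/v_i$ evaluated at those LinDistFlow values. But by Subsection~\ref{PropInitial} this is precisely the recipe for the \textsc{Warm} initial point $\mathbf{u}_0^{\textsc{Warm}}$ of $\mathcal{M}_\textrm{QE}$ --- solve the LinDistFlow equations for $v,P,Q$, then apply $\mathcal{R}^\textsc{QE}_{2}$ --- so $\mathbf{u}_1 = \mathbf{u}_0^{\textsc{Warm}}$, which is the claim.

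I expect the main obstacle to be the second paragraph: one must carefully verify that the tangent space at the (seemingly degenerate) flat point forces exactly the zero-current condition, since this is what collapses the tangent-space update \eqref{QE_xiTang} onto the LinDistFlow system; everything else is bookkeeping. A secondary subtlety is the step length --- the identification requires the undamped (unit) step, which is the canonical step of the approximate Newton method, so one should either note that the Armijo rule in Algorithm~\ref{ProposedAlg} accepts it at the first iteration or simply state the corollary for the unit step.
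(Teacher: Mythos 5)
Your proposal is correct and follows essentially the same route as the paper's proof: at the \textsc{Flat} point the tangency condition of $\mathcal{M}_\textrm{QE}$ forces the current components of $\boldsymbol{\zeta}_0$ to vanish, so $\mathbf{A}\tilde{\mathbf{u}}_0=\mathbf{b}$ collapses to the LinDistFlow system \eqref{DistFlow1}--\eqref{DistFlow3}, and $\mathcal{R}^\textsc{QE}_{2}$ then reconstructs the currents exactly as in the \textsc{Warm} recipe. Your extra details (the explicit differential computation at the flat point and the remark about the unit step being implicit) only elaborate what the paper leaves terse.
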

The proof is included in the Appendix (Section \ref{Proofcor1}).

\section{Numerical Comparisons}
\label{SecSimRes}
In this section, we evaluate the performance of the Riemannian optimization methods ---namely the Riemannian Gradient Descent, the Riemannian Newton's and the proposed Riemannian approximate Newton methods--- on several standard IEEE radial distribution test networks with $18$, $22$, $33$, $69$, $85$, and $141$ nodes \cite{Zimmerman_EtAl-PowerSys2011}, as well as on single-phase equivalents of the IEEE-$13$, IEEE-$37$, and IEEE-$123$ test networks \cite{DistTestFeeders}.
We refer to the test networks as 13-node, 18-node, etc.
All methods are implemented in Matlab R2018a and tested on a desktop Intel i5-2500 at 3.3 GHz with 8 GB RAM. \footnote{The implementations are also made available in Manopt, a Matlab toolbox for optimization on manifolds \cite{manopt}.}
We added an additional stopping criterion in all Algorithms requiring the maximum voltage change in consecutive iterations be less than a small tolerance ($\delta > 0$), modifying the while-loop as follows: 
\begin{equation*}
\textbf{while } \|\mathrm{grad}f(\mathbf{x}_{k})\|_{2} > \epsilon \text{ or }  \|\sqrt{\mathbf{v}_{k+1}}-\sqrt{\mathbf{v}_{k}}\|_{\infty} > \ \delta, 
\end{equation*}
which better fits the LF problem and ensures a consistent comparison with the Newton-Raphson method. 
The tolerances are $\epsilon = \delta = 10^{-6}$.
Armijo parameters are set to $\beta=0.3$, $\sigma=0.05$ for all methods, $\bar{\alpha}=4.5$ for Riemannian Gradient on the BFM manifold, and $\bar{\alpha}=1$ for the other methods.
Computation times are reported in milliseconds (ms) and are obtained after running the main loop for $100$K times;
they do not include pre-processing or the initialization. 

The remainder of this section is structured as follows.
In Subsection \ref{SubRiemComp}, we compare the performance of the Riemannian optimization methods for the Base Case, which employs  \textsc{Warm} initialization, and, for the QE manifold, retraction $\mathcal{R}^\textsc{QE}_{2}$.
In Subsection \ref{SubCompInitRetr}, we evaluate the impact of \textsc{Flat} and $\mathcal{R}^\textsc{QE}_{1}$, as an alternative initialization and retraction, respectively.
In Subsection \ref{SubCompNR}, we compare the proposed method with the traditional Newton-Raphson method, considering, in addition, increased loading conditions and larger test networks.
In Subsection \ref{subApproximate}, we consider the first iteration of the proposed method as a linear approximant to the LF problem, and we compare its accuracy with existing approximate LF solution methods (LinDistFlow and \cite{Bolognani_EtAl-AllertonConf2015}).
Lastly, in Subsection \ref{BFSsubsec}, we illustrate similarities and differences with the well-known Backward-Forward Sweep method.

\subsection{Base Case Comparison}
\label{SubRiemComp}

\begin{table}
\renewcommand{\arraystretch}{1.0}
\renewcommand{\tabcolsep}{1mm}
\caption{Performance of Riemannian Optimization Methods (Base Case)}
\label{Tab:BaseCase}
\centering
\begin{tabular}{c|c|c|c|c|c|c}
    \hline
    \multirow{2}{*}{Network}&{Time (ms)}&\multicolumn{5}{c}{Riemannian Optimization Methods}\\
    \cline{3-7}
    & Iterations & GD(BFM) & P(BFM) & GD(QE) & N(QE) & P(QE) \\
    \hline
    \hline
    \multirow{2}{*}{13-node}& Time & 0.13$\times 10^3$ & 1.39 & 0.32$\times 10^3$ & 0.77 & 0.49\\
    & Iter \# & 162 & 3 & 1,420 & 3 & 3\\ 
    \hline
    \multirow{2}{*}{18-node}& Time & 21.6$\times10^{3}$ & 1.89 & 93$\times10^{3}$ & 2.7 & 0.55\\
    & Iter \# & 22,221 & 3 & 438,042 & 5 & 3\\ 
    \hline
    \multirow{2}{*}{22-node}& Time & 0.18$\times 10^3$ & 1.51 & 0.33$\times 10^3$ & 1.57 & 0.48\\
    & Iter \# & 151 & 2 & 1,288 & 3 & 2\\ 
    \hline
    \multirow{2}{*}{33-node}& Time & 3.8$\times 10^3$ & 3.36 & 54.2$\times 10^3$ & 3.54 & 0.96\\
    & Iter \# & 2,201 & 3 & 186,731 & 3 & 3\\
    \hline
    \multirow{2}{*}{37-node}& Time & 0.72$\times 10^3$ & 2.97 & 105.6$\times 10^3$ & 24.3 & 0.72\\
    & Iter \# & 363 & 2 & 213,478 & 3 & 2\\ 
    \hline
    \multirow{2}{*}{69-node}& Time & 4.38$\times 10^3$ & 7.17 & 3.5$\times 10^3$ & 14.3 & 1.98\\
    & Iter \# & 1,232 & 3 & 9,094 & 3 & 3\\ 
    \hline
    \multirow{2}{*}{85-node}& Time & 19.2$\times 10^3$ & 8.61 & 14.6$\times 10^3$ & 37.1 & 2.34\\
    & Iter \# & 4,383 & 3 & 31,415 & 4 & 3\\ 
    \hline
    \multirow{2}{*}{123-node}& Time & 23.5$\times 10^3$ & 14.83 & 35.7$\times 10^3$ & 125.7 & 3.29\\
    & Iter \# & 3,622 & 3 & 46,145 & 4 & 3\\ 
    \hline
    \multirow{2}{*}{141-node}& Time & 69.3$\times 10^3$ & 14.29 & 47.5$\times 10^3$ & 140 & 3.75\\
    & Iter \# & 9,709 & 3 & 64,469 & 4 & 3\\ 
    \hline
\end{tabular}
\end{table}

In Table \ref{Tab:BaseCase}, we compare the performance of the Riemannian optimization methods, for the Base Case, in terms of computation time and iterations.
The methods are denoted by: ``GD'' for Gradient Descent; ``N'' for Newton's; ``P'' for the proposed approximate Newton method; in parentheses we show the applicable manifold, BFM or QE.
We note that the times reported for Newton's method do not include the Hessian evaluation step, which was a time-consuming task that renders the performance of this method not acceptable in practice; for this reason we only included the results for the QE manifold mainly to compare the iterations it takes to converge with the proposed approximate Newton method.

The results show that the Riemannian GD converges after a considerably large number of iterations and requires significant computation time (in the order of seconds).
Conversely, the Riemannian Newton's method and the proposed approximate Newton method converge after only a few iterations (as expected), with the proposed method outperforming Newton's method in terms of computational effort in all test networks.
The results indicate that Newton's method (even excluding the Hessian evaluation) is slower than the proposed method by a factor that ranges from about 1.5 for the $13$-node to about 38 for the $123$-node test network.
In addition, we note that GD(BFM) takes much less iterations to converge, but each iteration is slower compared to GD(QE), by a factor ranging from 3.5 to 9.7, mainly due to the computational effort it takes to execute the retraction $\mathcal{R}^\textsc{BFM}$ compared to  $\mathcal{R}^\textsc{QE}_2$.\footnote{
We note that $\mathcal{R}^\textsc{QE}_2$ can be performed in parallel for each node/line, whereas $\mathcal{R}^\textsc{BFM}$ needs to be implemented in a forward sweep manner. 
Our implementation, however, did not take advantage of parallel processing, but exploited sparsity and vectorized calculation.}
For the same reason, although the P(BFM) and P(QE) converge at  the same number of iterations, the former is slower by a factor ranging from 2.8 to 4.5.\footnote{ 
We also tested Manopt variant of GD \cite{nocedal_EtAl-book2006} and Trust-Region (TR) \cite{Absil_EtAl-CompMath2007} methods on the BFM and QE manifolds.
Although Manopt GD converged in fewer iterations (about one order of magnitude less), each iteration was slower and the computation times were comparable to the ones reported in Table \ref{Tab:BaseCase}.
Manopt TR method converged in about 2-3 iterations in most cases, but the computation times were slower compared with the times reported for P(BFM) and P(GE) in Table \ref{Tab:BaseCase}, by about 2 orders of magnitude.}

\begin{figure}
  \centering
  \includegraphics[width=1\linewidth]{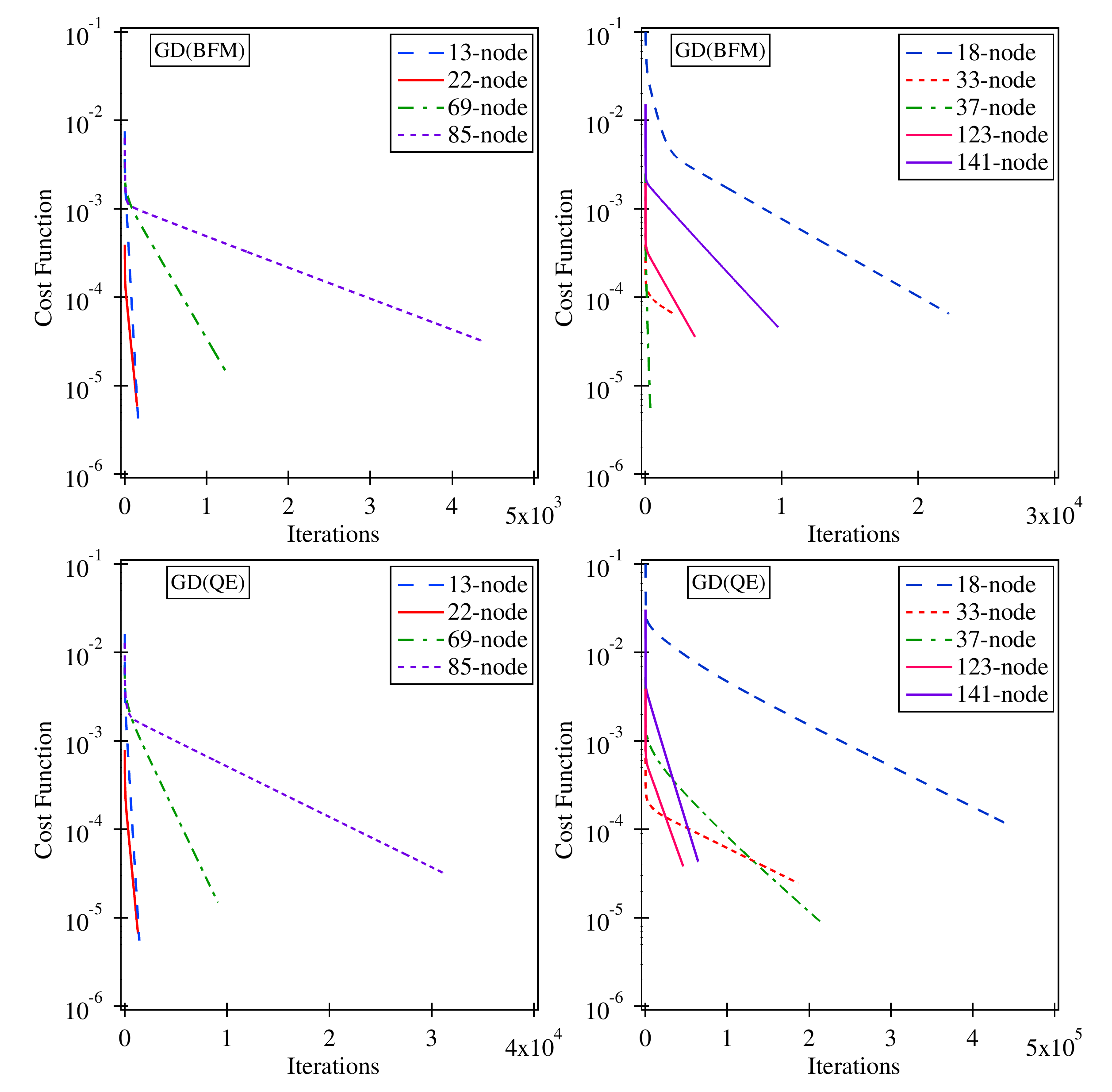}
  \caption{Trajectories (cost function value vs. iterations); Top:  GD(BFM); Bottom: GD(QE). The vertical axis is in logarithmic scale.} 
  \label{fig:GDCurve}
\end{figure}

\begin{figure}
  \centering
  \includegraphics[width=1\linewidth]{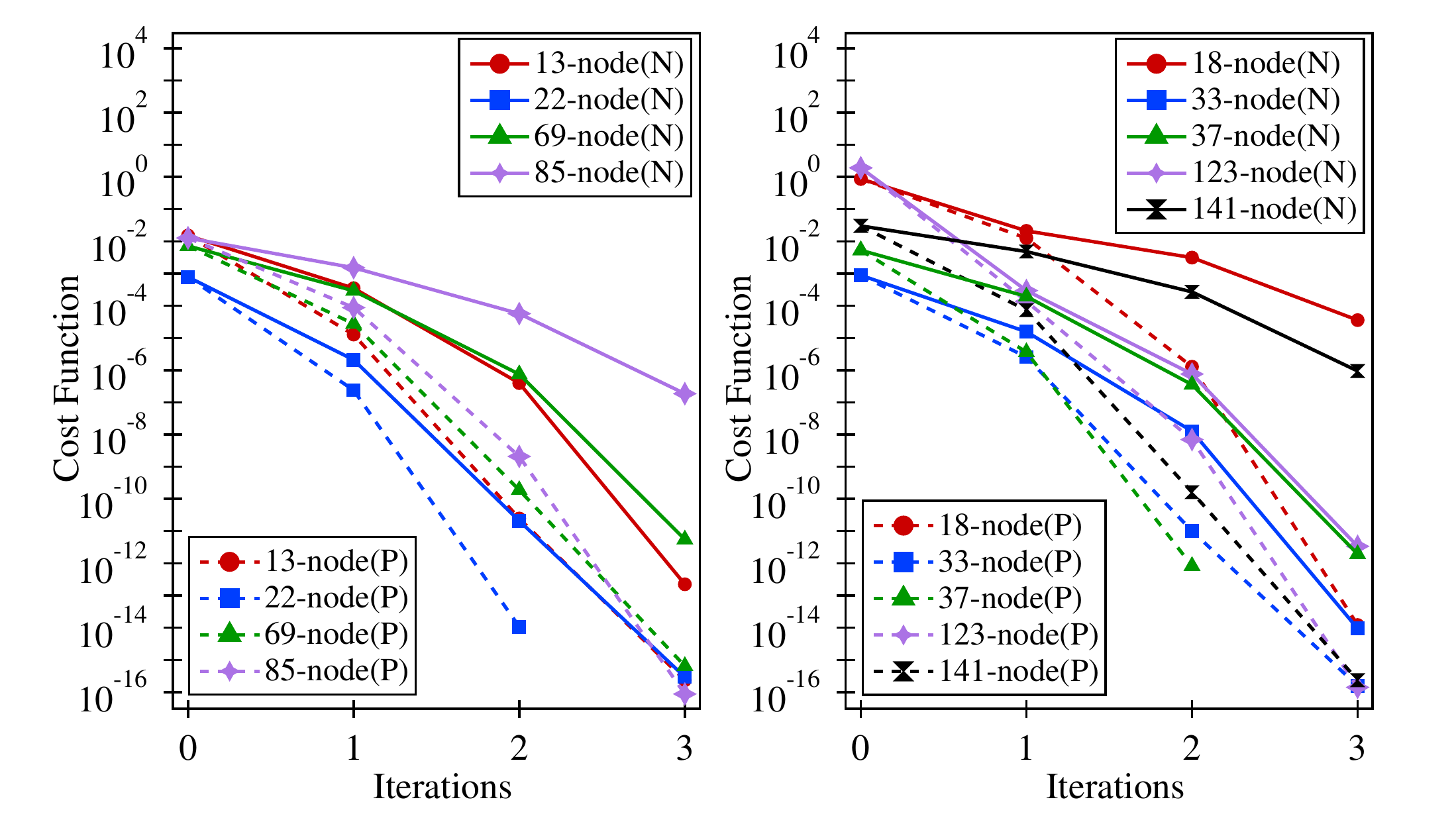}
  \caption{Trajectories (cost function value vs. iterations); N(QE): solid lines; P(QE): dashed lines. The vertical axis is in logarithmic scale.} 
  \label{fig:ObjFuncCount}
\end{figure}

Figure \ref{fig:GDCurve} illustrates the cost function trajectories for GD(BFM) (top) and GD(QE) (bottom), for all test networks.\
We observe that the cost function of GD on the BFM (QE) manifold decreases over consecutive iterations, and reaches a value of $10^{-4}$ within around 80 (760), 20K (455K), 20 (320), 420 (53K), 100 (91K), 700 (5.7K), 3K (22K), 2K (30K), and 7.7K (52K) iterations for the $13\text{-},$ $18\text{-},$ $22\text{-},$ $33\text{-},$ $37\text{-},$ $69\text{-},$ $85\text{-},$ $123\text{-},$ and $141\text{-}$node test networks, respectively.

Figure \ref{fig:ObjFuncCount} shows the cost function trajectories of the N(QE) and P(QE) for up to 3 iterations (since the latter method converges in at most 3 iterations for the employed test networks).
Notably, P(QE) curves are always below the corresponding N(QE) curves for all networks.
In most cases, the difference in terms of cost improvement in the first iteration is about one order of magnitude.
Also, note that whereas, for instance, the cost function of the $13$-node and $69$-node test networks is reduced to at least $0.01$ of its initial value in one or two iterations in Fig. \ref{fig:ObjFuncCount}, GD(QE) ---see Fig. \ref{fig:GDCurve}--- requires 653 and 6,260 iterations, respectively, to reach the same reduction.

\subsection{Comparison of Alternative Initializations and Retractions}
\label{SubCompInitRetr}


\begin{table}
\renewcommand{\arraystretch}{1.0}
\caption{Performance of \textsc{Flat} on Riemannian Newton's and Proposed Approximate Newton Methods (Comparison with \textsc{Warm})}
\label{Tab:InitImpact}
\centering
\begin{tabular}{c|c|c|c|c}
    \hline
    \multirow{2}{*}{Network}&{Time (ms)}&\multicolumn{3}{c}{Riemannian Optimization Methods}\\
    \cline{3-5}
    & Iterations & P(BFM) & N(QE) & P(QE)\\
    \hline
    \hline
    \multirow{2}{*}{$13$-node}& Time & 2.21 (+0.82) &  1.26 (+0.49) & 0.61 (+0.12)\\
    & Iter \# & 4 (+1) & 5 (+2) & 4 (+1) \\ 
    \hline
    \multirow{2}{*}{$18$-node}& Time & 3.03 (+1.14) & 3.77 (+1.07) & 0.8 (+0.25)\\
    & Iter \# & 4 (+1) & 7 (+2) & 4 (+1)\\ 
    \hline
    \multirow{2}{*}{$22$-node}& Time & 2.63 (+1.12) & 2.03 (+0.46) & 0.55 (+0.07)\\
    & Iter \# & 3 (+1) & 4 (+1) & 3 (+1)\\ 
    \hline
    \multirow{2}{*}{$33$-node}& Time & 5.25 (+1.89) & 6.3 (+2.76) & 1.05 (+0.09)\\
    & Iter \# & 4 (+1) & 5 (+2) & 4 (+1)\\ 
    \hline
    \multirow{2}{*}{$37$-node}& Time & 4.4 (+1.43) & 24.6 (+0.3) & 0.82 (+0.1)\\
    & Iter \# & 3 (+1) & 5 (+2) & 3 (+1)\\ 
    \hline
    \multirow{2}{*}{$69$-node}& Time & 11 (+3.83) & 27.7 (+13.4) & 2.12 (+0.14) \\
    & Iter \# & 4 (+1) & 6 (+3) & 4 (+1)\\ 
    \hline
    \multirow{2}{*}{$85$-node}& Time & 13.49 (+4.88) & 59.8 (+22.7) & 2.49 (+0.15) \\
    & Iter \# & 4 (+1) & 6 (+1) & 4 (+1)\\ 
    \hline
    \multirow{2}{*}{$123$-node}& Time & 20.28 (+5.45) & 153 (+27.3) & 3.72 (+0.43) \\
    & Iter \# & 4 (+1) & 5 (+1) & 4 (+1)\\ 
    \hline
    \multirow{2}{*}{$141$-node}& Time & 22.39 (+8.1) & 182.9 (+42.9) & 3.96 (+0.21) \\
    & Iter \# & 4 (+1) & 5 (+1) & 4 (+1)\\ 
    \hline
\end{tabular}
\end{table}

In Table \ref{Tab:InitImpact}, we evaluate the impact of \textsc{Flat} on the performance of Riemannian Newton's method and the proposed approximate Newton method.
The values in parentheses show the differences when \textsc{Warm} is employed, i.e., with the results of Table \ref{Tab:BaseCase}.  
The comparison verifies that \textsc{Flat}, which is considerably further to the optimal solution compared with \textsc{Warm}, performs worse in terms of both computation times and iterations (all differences are positive).
Notably, Corollary \ref{cor1} implies that P(QE), employing \textsc{Flat} and $\mathcal{R}^\textsc{QE}_{2}$, requires one more iteration compared with employing \textsc{Warm}.
The results in Table \ref{Tab:InitImpact} verify this outcome; the difference in time is actually the time required to execute the first iteration, which is in general lower than the average time per P(QE) iteration in Table \ref{Tab:BaseCase}.
In addition, P(QE) is affected much less (the computation time increase ranges from 0.07 ms for the 22-node to 0.43 ms for the 123-node test network) compared with Newton's method (whose computation time increase ranges from 0.46 ms to 27.3 ms for the same networks); hence, the differences observed in the Base Case (Table \ref{Tab:BaseCase}) become larger when employing \textsc{Flat}.
Newton's method is slower than the proposed method by a factor that ranges from 2 for the 13-node to 41 for the 123-node test network.
In general, the average time per iteration increases for P(BFM), which, compared with P(QE), becomes slower by a factor ranging from 3.6 to 5.6.

\begin{table}
\renewcommand{\arraystretch}{1.0}
\caption{Performance of $\mathcal{R}^\textsc{QE}_{1}$ on Proposed Approximate Newton Method (Comparison with $\mathcal{R}^\textsc{QE}_{2}$).}
\label{Tab:RetComp}
\centering
\begin{tabular}{c|c|c|c}
    \hline
    \multirow{2}{*}{Network}&{Time (ms)}&\multicolumn{2}{c}{Initializations}\\
    \cline{3-4}
    & Iterations & \textsc{Flat} & \textsc{Warm}\\
    \hline
    \hline
    \multirow{2}{*}{$13$-node}& Time &  0.73 (+0.12) & 0.58 (+0.09)\\
    & Iter \# &  4 (0) & 3 (0) \\ 
    \hline
    \multirow{2}{*}{$18$-node}& Time &  2.12 (+1.32) & 0.64 (+0.09)\\
    & Iter \# &  9 (+5) & 3 (0) \\ 
    \hline
    \multirow{2}{*}{$22$-node}& Time &  0.64 (+0.09) & 0.54 (+0.06)\\
    & Iter \# &  3 (0) & 2 (0) \\ 
    \hline
    \multirow{2}{*}{$33$-node}& Time &  1.17 (+0.12) & 1.05 (+0.09)\\
    & Iter \# &  4 (0) & 3 (0) \\ 
    \hline
    \multirow{2}{*}{$37$-node}& Time &  1.29 (+0.47) & 0.79 (+0.07)\\
    & Iter \# &  4 (+1) & 2 (0) \\ 
    \hline
    \multirow{2}{*}{$69$-node}& Time &  2.27 (+0.15) & 2.09 (+0.11)\\
    & Iter \# &  4 (0) & 3 (0) \\ 
    \hline
    \multirow{2}{*}{$85$-node}& Time &  2.61 (+0.12) & 2.44 (+0.10)\\
    & Iter \# &  4 (0) & 3 (0) \\ 
    \hline
    \multirow{2}{*}{$123$-node}& Time &  3.87 (+0.15) & 3.42 (+0.13)\\
    & Iter \# &  4 (0) & 3 (0) \\ 
    \hline
    \multirow{2}{*}{$141$-node}& Time &  5.45 (+1.49) & 3.88 (+0.13)\\
    & Iter \# &  5 (+1) & 3 (0) \\ 
    \hline
\end{tabular}
\end{table}

In Table \ref{Tab:RetComp}, we evaluate the impact of $\mathcal{R}^\textsc{QE}_{1}$ on the performance of the proposed approximate Newton method, which stands out as the most computationally efficient method, for both \textsc{Flat} and \textsc{Warm}.
The values in parentheses show the differences when $\mathcal{R}^\textsc{QE}_{2}$ is employed with either \textsc{Flat} or \textsc{Warm}.
The comparison suggests that when a closer initialization (\textsc{Warm}) is employed, both retractions perform well (yield the same number of iterations) with $\mathcal{R}^\textsc{QE}_{1}$ being slightly less computationally efficient ---the computation time increase is up to 0.13 ms, ranging from 3\% to 18\%.
The impact of $\mathcal{R}^\textsc{QE}_{1}$ combined with \textsc{Flat} is occasionally more severe (see, e.g., the 18-node test network, where the iterations increase by 5, and the computation time also increases by 165\%).

Lastly, we experimented with initial points selected intentionally to differ substantially from a reasonable LF solution.
Unreasonably low voltage and high current values were used.
We tested all methods on the $13$-node test network and observed convergence to a solution with low voltages that are not met in practical networks.
This was not a surprise.
LF equations \eqref{RealBalance}--\eqref{NonConvMan}, in general, admit multiple solutions, however, the solution with practical voltage magnitudes --- around 1 per-unit (p.u.) --- is unique \cite{Chiang_EtAl-CircSys1990}.
In fact, \cite{Chiang_EtAl-CircSys1990} shows a small example with two solutions; 
the realistic one and a low voltage one that is the type of solution we reached when we started from unreasonably low voltages.\footnote{
We also tried alternative Armijo parameters and in some cases managed to converge to the correct solution; however, convergence to a low voltage solution, in general, cannot be excluded if we start from a point that is close to that solution. Nevertheless, we note that both \textsc{Flat} and \textsc{Warm} initializations were close enough to the correct solution, as is shown in our results.}






\subsection{Comparison with the Newton-Raphson Method}
\label{SubCompNR}
In this subsection, we compare the proposed Riemannian approximate Newton method with the traditional Newton-Raphson (NR) method (MATPOWER's implementation \cite{Zimmerman_EtAl-PowerSys2011}).

In Table \ref{Tab:NRComp}, we compare the performance of the NR method, using both \textsc{Flat} and \textsc{Warm} initializations, with P(QE). 
The values in parentheses show the differences; e.g., positive values in time suggest that NR is slower compared to P(QE).
The voltage phase angles required to warm start NR were recovered using \cite{Farivar_EtAl-PowerSys2013}.
The results show the same number of iterations and similar computational effort, implying that the proposed Riemannian approximate Newton method can achieve comparable performance with the NR method.

\begin{table}
\renewcommand{\arraystretch}{1.0}
\caption{Performance of Newton-Raphson Method (Comparison with the Proposed P(QE) Method).}
\label{Tab:NRComp}
\centering
\begin{tabular}{c|c|c|c}
    \hline
    \multirow{2}{*}{Network}&{Time (ms)}&\multicolumn{2}{c}{Initializations}\\
    \cline{3-4}
    & Iterations & \textsc{Flat} & \textsc{Warm}\\
    \hline
    \hline
    \multirow{2}{*}{$13$-node}& Time & 1.15 (+0.54) & 0.90 (+0.41)\\
    & Iter \# & 4 (0) & 3 (0)\\ 
    \hline
    \multirow{2}{*}{$18$-node}& Time & 1.23 (+0.43) & 0.98 (+0.43)\\
    & Iter \# & 4 (0) & 3 (0) \\ 
    \hline
    \multirow{2}{*}{$22$-node}& Time & 1.02 (+0.47) & 0.70 (+0.22)\\
    & Iter \# & 3 (0) & 2 (0) \\ 
    \hline
    \multirow{2}{*}{$33$-node}& Time & 1.54 (+0.49) & 1.19 (+0.23)\\
    & Iter \# & 4 (0) & 3 (0) \\ 
    \hline
    \multirow{2}{*}{$37$-node}& Time & 1.22 (+0.4) & 0.79 (+0.07)\\
    & Iter \# & 3 (0) & 2 (0) \\ 
    \hline
    \multirow{2}{*}{$69$-node}& Time &  2.30 (+0.18) & 1.79 (-0.19)\\
    & Iter \# & 4 (0) & 3 (0) \\ 
    \hline
    \multirow{2}{*}{$85$-node}& Time & 2.70  (+0.21) & 2.12 (-0.23)\\
    & Iter \# & 4 (0) & 3 (0) \\ 
    \hline
    \multirow{2}{*}{$123$-node}& Time & 3.45  (-0.27) & 2.59 (-0.7)\\
    & Iter \# & 4 (0) & 3 (0) \\ 
    \hline
    \multirow{2}{*}{$141$-node}& Time & 3.94  (-0.02) & 2.99 (-0.75)\\
    & Iter \# & 4 (0) & 3 (0) \\ 
    \hline
\end{tabular}
\end{table}

LF methods usually require more iterations and computational effort in higher loading conditions.
In Table \ref{Tab:LoadingComp}, we compare NR with P(QE), using \textsc{Warm} and $\mathcal{R}^\textsc{QE}_{2}$, under two increased loading scenarios, namely a medium and a high loading scenario.
Loading values are adjusted by a factor whose value is given in the first column of Table \ref{Tab:LoadingComp} for each network.
The values in parentheses show the differences with P(QE) obtained under the same loading conditions.
Positive (negative) differences declare worse (better) performance for NR compared with P(QE).
Indeed, the results in Table \ref{Tab:LoadingComp} show that the computation time increases by up to 1 ms for the medium and by up to 2 ms for the high loading scenario for the NR method (comparing with the values for the base loading scenario in Table \ref{Tab:NRComp}).
P(QE) exhibits an increase up to 0.44 ms for the medium and up to 2.66 ms for the high loading scenario (comparing with the values for the base loading scenario in Table \ref{Tab:BaseCase}).
Overall, the results for both methods under increased loading scenarios are comparable; 
the iterations remain practically the same (occasionally NR may need one more iteration) and the times are still in the order of a few milliseconds.
We also note that we tested P(BFM) with \textsc{Warm} initialization.
In almost all networks, the number of iterations was the same, but each P(BFM) iteration was slower compared with the P(QE) by a factor that ranged from 3.1 to 4.6, for both the medium and high loading scenarios, indicating a similar behavior with the base loading (reported in Table \ref{Tab:BaseCase}).

\begin{table}
\renewcommand{\arraystretch}{1.0}
\caption{Performance of Newton-Raphson under Increased Loading Scenarios (Comparison with the Proposed P(QE) Method)}
\label{Tab:LoadingComp}
\centering
\begin{tabular}{c|c|c|c}
    \hline
    \multirow{2}{*}{Network}&{Time (ms)}&\multicolumn{2}{c}{Loading Scenario}\\
    \cline{3-4}
    & Iterations & Medium & High\\
    \hline
    \hline
    {$13$-node} & Time & 1.21 (+0.51) & 1.49 (+0.62)\\
    M:$\times2.5$; H:$\times3.5$ & Iter \# & 4 (0) & 5 (0)\\ 
    \hline
    {$18$-node}& Time & 0.99 (+0.34) & 1.32 (+0.44)\\
    M:$\times1.5$; H:$\times2$& Iter \# & 3 (0) & 4 (0)\\ 
    \hline
    {$22$-node}& Time & 1.38 (+0.64) & 1.73 (+0.52)\\
    M:$\times7$; H:$\times10$& Iter \# & 4 (+1) & 5 (0)\\ 
    \hline
	{$33$-node}& Time & 1.59 (+0.59) & 1.99 (+0.34)\\
    M:$\times2.5$; H:$\times3.5$& Iter \# & 4 (+1) & 5 (0)\\ 
    \hline
    {$37$-node}& Time & 1.19 (+0.11) & 1.97 (+0.18)\\
    M:$\times5$; H:$\times7.5$& Iter \# & 3 (0) & 5 (0)\\ 
    \hline
    {$69$-node}& Time & 1.78 (-0.27) & 2.89 (-0.48)\\
    M:$\times2$; H:$\times3$& Iter \# & 3 (0) & 5 (0)\\ 
    \hline
    {$85$-node}& Time & 2.08 (-0.32) & 4.11 (+0.15)\\
    M:$\times1.5$; H:$\times2.5$& Iter \# & 3 (0) & 6 (+1)\\ 
    \hline
    {$123$-node}& Time & 3.34 (+0.08) & 5.04 (-0.35)\\
    M:$\times3$; H:$\times4.5$& Iter \# & 4 (+1) & 6 (+1)\\ 
    \hline
    {$141$-node}& Time & 3.99 (+0.10) & 4.97 (-1.44)\\
    M:$\times3$; H:$\times4$& Iter \# & 4 (+1) & 5 (0)\\ 
    \hline
\end{tabular}
\end{table}

\begin{table}
\renewcommand{\arraystretch}{1.0}
\caption{Performance of Proposed P(QE) Method on Larger Networks (Newton-Raphson Performance in Parentheses).}
\label{Tab:LargeNetworks}
\centering
\begin{tabular}{c|c|c|c|c}
    \hline
    \multirow{2}{*}{Network} & Time (ms) &\multicolumn{3}{c}{Loading Scenario}\\
    \cline{3-5}
    & Iterations & Base & Medium & High \\
    \hline
    \hline
    & [Load] & [$\times1$] & [$\times2.5$] & [$\times 3.2]$ \\
     $906$-node   & Time & 22.5 (17.2) & 29.9 (23.2) & 30.2 (28.8) \\
     & Iter \#   & 3 (3) & 4 (4)& 4 (5)\\
    \hline
      & [Load] & $[\times1$]& $[\times1.5$] & [$\times1.9$]\\
    $2500$-node& Time & 72.4 (49.3)& 96.7 (65.6) & 121.0 (82.2)\\
     & Iter \# & 3 (3) & 4 (4)  & 5 (5) \\ 
    \hline
\end{tabular}
\end{table}

We further elaborate on the performance under several loading conditions for larger networks, a $906$-node European low voltage test network \cite{DistTestFeeders}, and a $2500$-node test network that is the single-phase equivalent of the $8500$-node network in \cite{DistTestFeeders}.
The results for the P(QE) method are presented in Table \ref{Tab:LargeNetworks}; the values in parentheses are the respective NR results. 
Although NR seems to perform better, the results are in the same order of magnitude (tens of milliseconds), which further enhances the argument that the proposed method achieves comparable performance with NR.
Lastly, we tested the performance of P(BFM).
The results indicated the same number of iterations and an increase in computation times by a factor of 6 and 7.6, for the $906$-node and the $2500$-node networks, respectively, under all loading conditions.

\subsection{Comparison of Approximate LF Solutions}
\label{subApproximate}

In this subsection, we consider approximate (not exact) LF solutions obtained by the first iteration of P(QE), employing \textsc{Warm} and $\mathcal{R}^\textsc{QE}_{2}$, referred to as the ``proposed approximant.''

\begin{figure}
  \centering
  \includegraphics[width=0.9\linewidth]{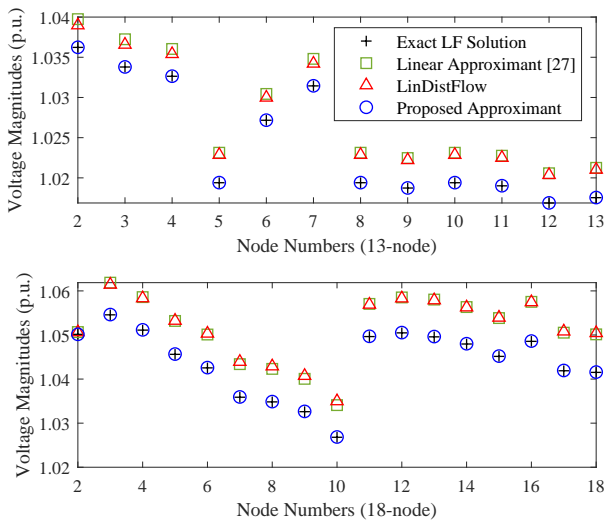}
  \caption{Comparison of approximate LF solutions on the $13$-node (upper) and the $18$-node (lower) test networks.}
  \label{fig:ApproxLF13and18}
\end{figure}
\begin{figure}
  \centering
  \includegraphics[width=1\linewidth]{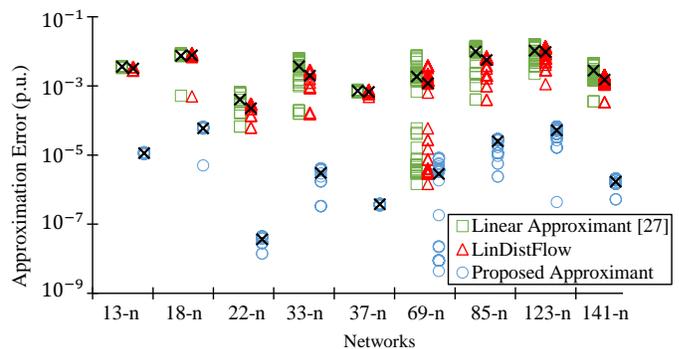}
  \caption{Voltage approximation error of the linear approximant in \cite{Bolognani_EtAl-AllertonConf2015}, the LinDistFlow solution and the proposed approximant. The ``x'' symbols represent the mean values.
  The vertical axis is in logarithmic scale.}
  \label{fig:ApproxLF}
\end{figure}

Figure \ref{fig:ApproxLF13and18} illustrates the voltage magnitudes obtained by the linear approximant in \cite{Bolognani_EtAl-AllertonConf2015}, the LinDistFlow solution and the proposed approximant ---including also the exact LF solution--- for the 13-node and 18-node test networks.
Note that, by definition of \textsc{Warm}, the LinDistFlow solution is the initial point of P(QE), hence, it is expected that the latter outperforms the former.
Figure \ref{fig:ApproxLF} illustrates the voltage approximation errors and their mean values for all methods.
The proposed approximant outperforms other approximants by at least two orders of magnitude on all test networks.
LinDistFlow achieves generally slightly better results compared with \cite{Bolognani_EtAl-AllertonConf2015} (recall, however, that the latter applies to meshed networks as well), which verifies the remarks in \cite{Bolognani_EtAl-AllertonConf2015} that LinDistFlow seems to improve the quality of their linear approximant (recall that the LinDistFlow solution is obtained by a non-linear change of coordinates in \cite{Bolognani_EtAl-AllertonConf2015} and assuming zero shunt admittances); however, we observe that \cite{Bolognani_EtAl-AllertonConf2015} yields a slightly better approximation than LinDistFlow in the 18-node test network. 

\subsection{Comparison with the Backward-Forward Sweep Method}
\label{BFSsubsec}
Last but not least, we consider the Backward-Forward Sweep (BFS) method, which has been proven to work well in radial networks.
Among several BFS variants, we discuss the approach proposed in \cite{Rajicic_EtAl-PowerDelivery1994}, as its forward sweep step (from the root to the leaf nodes) is identical to the $\mathcal{R}^\textsc{BFM}$ retraction.
At the backward sweep step (from leaf nodes to the root), BFS first moves from $\mathbf{x}_k$ to $\tilde{\mathbf{x}}_k$; then, at the forward sweep step, it applies $\mathcal{R}^\textsc{BFM}$ to find point $\tilde{\mathbf{x}}_{k+1}$ (on the BFM manifold).
The backward sweep  step ---which is described using the receiving-end power flows in \cite{Rajicic_EtAl-PowerDelivery1994}--- can be equivalently written using the sending-end flows, as follows:

Backward Sweep Step: $\tilde p_{j,k} = \bar p_j$, $\tilde q_{j,k} = \bar q_j$, $\tilde v_{j,k} = v_{j,k}$,
\begin{gather*}
\begin{split}
\tilde l_{j,k} = \Big[ &
\big(\sum_{j'\in \mathcal{J'}(j)}{\tilde P_{j',k}} + G_{j} \tilde v_{j,k} - \tilde p_j \big)^2 \\
&+ \big(\sum_{j' \in \mathcal{J'}(j)}{\tilde Q_{j',k}} - B_{j} \tilde v_{j,k} - \tilde q_j\big)^2 \Big] / \tilde v_{j,k}, 
\end{split}
\end{gather*}
and then $\tilde P_{j,k}$ and $\tilde Q_{j,k}$ are obtained from \eqref{RealBalance} and \eqref{ReactiveBalance} using $\tilde P_{j',k}, \tilde v_{j,k}$, $\tilde l_{j,k}$, and $\tilde p_{j,k}$, $\tilde q_{j,k}$.
Note the similarity with the direction finding step of P(BFM), where we also require $\tilde p_{j,k} = \bar p_j$, and $\tilde q_{j,k} = \bar q_j$ ---see the second set of  $\mathbf{w}_k+\boldsymbol{\eta}_k = \mathbf{\bar w}$ part in \eqref{wmis}.
Hence, the difference between BFS and P(BFM) is in the direction $\boldsymbol{\xi}_k$.
While both methods use the known nodal injections $\bar p_j$, and $\bar q_j$, BFS finds the direction by applying the backward sweep step, whereas P(BFM) requires the direction to be on the tangent space of the BFM manifold that is obtained by the solution of a linear system.
In other words, BFS also stays on the BFM manifold at each iteration but moves in a different direction.

We ran BFS on all test networks, and even though it generally took the same or a few more iterations to reach the solution (within the same tolerances), it was up to one order of magnitude faster.
Following our previous analysis, this result is not a surprise, as the P(BFM) requires the solution of a linear system in the direction finding step.
Figure \ref{fig:ObjFuncCountBFS} illustrates the cost function trajectories of P(BFM) and the evaluation of this function for the BFS method.
As expected, since the proposed method minimizes this function, P(BFM) trajectories are below the BFS ones; we also observe that BFS does not exhibit noticeable progress after the first and second iterations for the $37$-node and $123$-node test cases, respectively.

\begin{figure}
  \centering
  \includegraphics[width=1\linewidth]{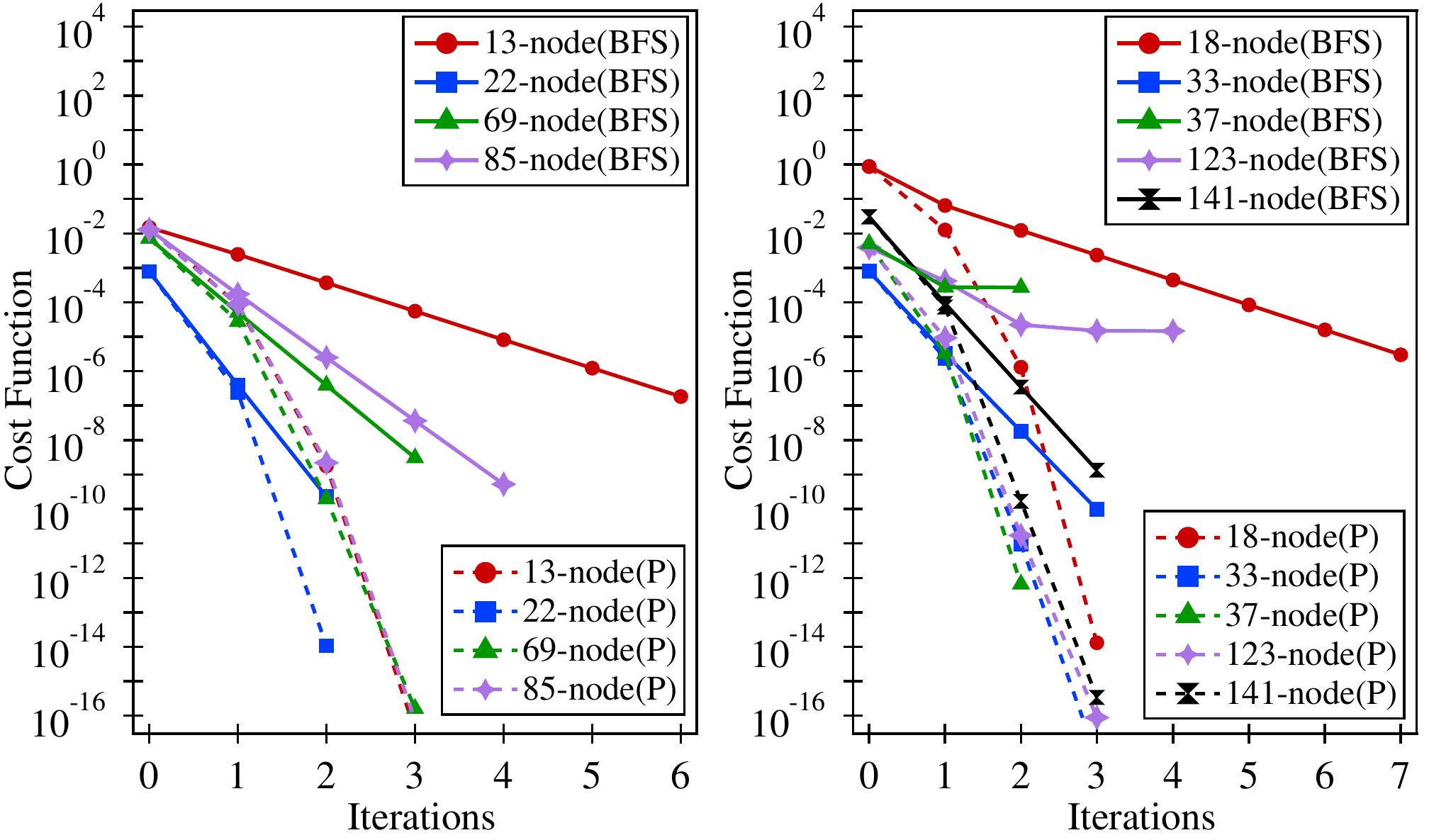}
  \caption{Trajectories (cost function value versus iterations); BFS: solid lines; P(BFM): dashed lines. The vertical axis is in logarithmic scale.} 
  \label{fig:ObjFuncCountBFS}
\end{figure}

Lastly, we should mention that the performance of BFS does not undermine the value of the proposed method.
The derived results and guarantees are promising for the application of Riemannian optimization to the more complicated OPF problem.
Furthermore, it has been observed that BFS may diverge under a high constant impedance loading condition (e.g., \cite{Araujo_EtAl-IJEPES2010} and \cite{Araujo_EtAl-IJEPES2018} report such cases on another BFS variant).
On an instance of the $123$-node test network (we replaced half of its constant power loads with their equivalent constant impedance, and applied a loading factor of 15.1), P(BFM) converged in 4 iterations with voltages spanning from about 0.7 to 1 p.u., whereas BFS exhibited oscillations in the voltage trajectory and diverged after 1K iterations.

\section{Conclusions and Future Research}
\label{SecConc}
In this paper, we introduced a novel Riemannian optimization approach to the LF problem in radial distribution networks, employing the branch flow model.
Our proposed method was shown to fall into the category of Riemannian approximate Newton methods, and guarantee descent at each iteration while maintaining a local superlinear convergence rate.
Extensive numerical results illustrated that the proposed method outperforms other Riemannian optimization methods, namely the Riemannian Gradient Descent and the Riemannian Newton's method, and that it achieves comparable performance with the traditional Newton-Raphson method.
Also, we observed that the first iteration of the proposed method yields an approximate LF solution that is of higher quality (by at least two orders of magnitude) compared with other linear LF approximants.
Lastly, we presented an interesting comparison with the well-known backward-forward sweep method, illustrating that while both methods essentially stay on the manifold, they move along different directions.

Our future research considers two directions. Firstly, we plan to extend the proposed Riemannian LF solution method to a general multi-phase power distribution network.
In a multi-phase setting, the presence of mutual admittances between phases adds several degrees of complexity to the BFM and requires identifying new valid retractions.
Secondly, and perhaps most importantly, we plan to address the more challenging OPF problem.
In an OPF setting, the BFM manifold combined with operational constraints yields a non-smooth manifold that requires approaches such as the Riemannian augmented Lagrangian or exact penalty methods introduced in \cite{Liu_EtAl-Springer2019}.


%

\appendix[Omitted Proofs]

\subsection{Proof of Lemma \ref{LemmaRetBFM} }
\label{AppRetBFM}
The proof for the centering condition is straightforward, since a zero tangent vector yields the tangent point, i.e., the current iterate $\mathbf{x}_k$.
For the local rigidity condition, without loss of generality, we assume that the stepsize $\alpha_k$ is 1. 
We first consider the retraction associated with the variable $l_{k+1}$.
Taking the derivative at $t=0$ yields:
\begin{equation}
    \frac{\mathrm d}{\mathrm d t}\mathcal{R}^\textsc{BFM}_{\mathbf{x}_k , l}(t\boldsymbol{\xi}_k)|_{t=0}=\frac{2(\zeta_{k}^{P}P_{k} + \zeta_{k}^{Q}Q_{k})v_{i,k} - \zeta_{k}^{v}(P_k^2 + Q_k^2)}{v_{i,k}^2}, \label{Ret2LRigidCond}
\end{equation}
where $\zeta_{k}^{P}, \zeta_{k}^{Q}, \zeta_{k}^{v}$ denote the elements of $\boldsymbol{\xi}_k$ associated with $P_k$, $Q_k$, and $v_{i,k}$, respectively.
Using the fact that $\mathbf{x}_k$ lies on the manifold, we have:
\begin{equation}
    P_{k}^{2}+Q_{k}^{2}=l_{k}v_{i,k}, \label{ManCurIter}
\end{equation}
whose tangent space is characterized as:
\begin{equation}
    2(\zeta_{k}^{P}P_{k}+\zeta_{k}^{Q}Q_{k})=\zeta_{k}^{v}l_{k}+\zeta_{k}^{l}v_{i,k}, \label{TangCurIter}
\end{equation}
\eqref{Ret2LRigidCond} yields $\frac{\mathrm d}{\mathrm d t}\mathcal{R}^\textsc{BFM}_{\mathbf{x}_k , l}(t\boldsymbol{\xi}_k)|_{t=0}=\zeta_{k}^{l}$, where $\zeta_{k}^{l}$ denotes the element of $\boldsymbol{\xi}_k$ associated with $l_k$.
The proof for variables $P_{k+1}$, $Q_{k+1}$, $v_{j,k+1}$, $p_{k+1}$ and $q_{k+1}$ is straightforward, since the retraction mappings are linear.

\subsection{Proof of Lemma \ref{LemmaRetQE1}}
\label{AppRetQE1}
The proof for the centering condition is straightforward.
For the local rigidity condition, assuming a stepsize $\alpha_k$ equal to 1, we first consider the proof for $l_{k+1}$.
After some algebraic manipulations, the derivative at $t=0$ yields:
\begin{gather} 
 \frac{\mathrm d}{\mathrm d t} \mathcal{R}^\textsc{QE}_{1,\mathbf{u}_k , l}(t\boldsymbol{\zeta}_k)|_{t=0} = 
 \frac{\zeta_k^{v} l_{k}}{v_{i,k}} +
 \frac{2  (\zeta_{k}^{P}P_{k}+\zeta_{k}^{Q}Q_{k})(v_{i,k}-l_{k})}{v_{i,k}(l_{k}+v_{i,k})} \nonumber\\ 
+ \frac{2 (P_{k}^{2}+ Q_{k}^{2}) (\zeta_{k}^{l}-\zeta_{k}^{v})}{v_{i,k}(l_{k}+v_{i,k})}.
\label{DiffL}
\end{gather}
Using \eqref{ManCurIter} and \eqref{TangCurIter}, \eqref{DiffL} yields $\frac{\mathrm d}{\mathrm d t}\mathcal{R}^\textsc{QE1}_{\mathbf{u}_k , l}(t\boldsymbol{\zeta}_k)|_{t=0}=\zeta_{k}^{l}$.
We then consider the retraction for variable $P_{k+1}$, which can be written as:
\begin{equation} \label{RxP}
\mathcal{R}^\textsc{QE}_{1,\mathbf{u}_k , P}(\boldsymbol{\zeta}_k) = \frac{2\Tilde{P}_k}{D_k+\Tilde{l}_k-\Tilde{v}_{i,k}} \mathcal{R}^\textsc{QE}_{1,\mathbf{u}_k , l}(\boldsymbol{\zeta}_k).
\end{equation}
Employing the product rule for derivatives, and using \eqref{ManCurIter}--\eqref{TangCurIter}, the centering condition, and the the local rigidity condition for $l_{k+1}$, \eqref{RxP} yields:
\begin{equation}
 \frac{\mathrm d}{\mathrm d t}\mathcal{R}^\textsc{QE}_{1,\mathbf{u}_k , P}(t\boldsymbol{\xi}_k)|_{t=0}= \frac{\zeta^{P}_k l_k - P_k  \zeta^{l}_k}{l^2_{k}} l_k + \frac{P_k}{l_k} \zeta_k^l 
= \zeta_{k}^{P}. \nonumber   
\end{equation}
The proof for $Q_{k+1}$ is similar and hence omitted.
The proof for $v_{j,k+1}$ is straightforward.

\subsection{Proof of Lemma \ref{LemmaRetQE2} }
\label{AppRetQE2}
The proof for the centering condition is straightforward.
For the local rigidity condition, the proof for variables $P_{k+1}$, $Q_{k+1}$, and $v_{j,k+1}$, is also straightforward.
The proof for variable $l_{k+1}$ follows the respective proof of Lemma \ref{LemmaRetBFM}.

\subsection{Proof of Proposition \ref{Prop1}}
\label{ProofProp1}

\begin{lemma}
\label{PropositionDescent}
The direction sequence $\{\boldsymbol{\xi}_k\}$ generated from the solution of \eqref{wmis} is gradient-related (\cite[Def. 4.2.1]{Abs_EtAl-Book2008}).
\end{lemma}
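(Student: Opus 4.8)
The plan is to verify directly the two requirements in the definition of a gradient-related sequence (\cite[Def.~4.2.1]{Abs_EtAl-Book2008}): for every subsequence $\{\mathbf{x}_k\}_{k\in\mathcal{K}}$ that converges to a point $\mathbf{x}_*$ which is not critical for $f_\textrm{BFM}$, the subsequence $\{\boldsymbol{\xi}_k\}_{k\in\mathcal{K}}$ must be bounded and satisfy $\limsup_{k\in\mathcal{K},\,k\to\infty}\langle\textrm{grad}f_\textrm{BFM}(\mathbf{x}_k),\boldsymbol{\xi}_k\rangle<0$.

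The first and key step is an exact identity for the inner product. Since $\boldsymbol{\xi}_k\in\mathcal{T}_{\mathbf{x}_k}\mathcal{M}_\textrm{BFM}$ we have $\mathbf{\Pi}_{\mathbf{x}_k}\boldsymbol{\xi}_k=\boldsymbol{\xi}_k$, and $\mathbf{\Pi}_{\mathbf{x}_k}$ is a symmetric idempotent; feeding this and the gradient expression \eqref{OurRiemGradEntirePF} into the dot product, and writing $\boldsymbol{\xi}_k=(\boldsymbol{\zeta}_k^T\ \boldsymbol{\eta}_k^T)^T$ with the $\mathbf{u}/\mathbf{w}$ block split, gives
\[
\langle\textrm{grad}f_\textrm{BFM}(\mathbf{x}_k),\boldsymbol{\xi}_k\rangle
= 2\Big\langle \begin{pmatrix}\mathbf{0}_{4J\times1}\\ \mathbf{w}_k-\mathbf{\bar w}\end{pmatrix},\boldsymbol{\xi}_k\Big\rangle
= 2\,\langle \mathbf{w}_k-\mathbf{\bar w},\,\boldsymbol{\eta}_k\rangle .
\]
The second block of \eqref{wmis} is $\boldsymbol{\eta}_k=\mathbf{\bar w}-\mathbf{w}_k$, so this collapses to $-2\|\mathbf{w}_k-\mathbf{\bar w}\|_2^2=-2\,f_\textrm{BFM}(\mathbf{x}_k)\le 0$; the constructed directions thus satisfy a clean identity that in particular already guarantees descent in Algorithm~\ref{ProposedAlg}.

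The $\limsup$ condition then follows from continuity together with Assumption~\ref{Feasibility}. Since $f_\textrm{BFM}\ge 0$ on $\mathcal{M}_\textrm{BFM}$ and equals $0$ at any LF solution, a point with $f_\textrm{BFM}(\mathbf{x}_*)=0$ would be a global minimizer of $f_\textrm{BFM}$ on $\mathcal{M}_\textrm{BFM}$, hence a critical point; therefore a non-critical $\mathbf{x}_*$ has $f_\textrm{BFM}(\mathbf{x}_*)>0$, and by continuity of $f_\textrm{BFM}$ one gets $\lim_{k\in\mathcal{K}}\langle\textrm{grad}f_\textrm{BFM}(\mathbf{x}_k),\boldsymbol{\xi}_k\rangle=-2\,f_\textrm{BFM}(\mathbf{x}_*)<0$.

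It remains to establish boundedness, which is the only delicate point. I would cast \eqref{wmis} as a square linear system $\mathbf{M}(\mathbf{x}_k)\,\boldsymbol{\xi}_k=\mathbf{r}(\mathbf{x}_k)$, whose rows are the differential of the linear equations \eqref{RealBalance}--\eqref{VoltDrop}, the linearization \eqref{TangCurIter} of \eqref{NonConvMan}, and the $2J$ equations pinning $\boldsymbol{\eta}_k$; both $\mathbf{M}$ and $\mathbf{r}$ depend smoothly on $\mathbf{x}_k$, and $\mathbf{r}(\mathbf{x}_k)\to\mathbf{r}(\mathbf{x}_*)$. Under the linear-independence hypothesis already invoked for \eqref{wmis}, $\mathbf{M}(\mathbf{x}_*)$ is nonsingular; since nonsingularity is an open property and matrix inversion is continuous, $\|\mathbf{M}(\mathbf{x}_k)^{-1}\|$ stays bounded on a neighborhood of $\mathbf{x}_*$, whence $\boldsymbol{\xi}_k=\mathbf{M}(\mathbf{x}_k)^{-1}\mathbf{r}(\mathbf{x}_k)$ is bounded along $\mathcal{K}$. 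The main obstacle is thus precisely to guarantee that the regularity of the linearized BFM system assumed along the iterates persists at the limit $\mathbf{x}_*$; by the openness argument this is inherited from the non-degeneracy of the radial BFM Jacobian at solutions with practical voltage magnitudes discussed after Assumption~\ref{Feasibility}, so I would present it as a consequence of that same standing assumption rather than re-derive it.
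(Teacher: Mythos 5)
Your core computation is exactly the paper's: the paper's proof of Lemma \ref{PropositionDescent} expands the dot product using \eqref{OurRiemGradEntirePF}, uses symmetry and idempotence of $\mathbf{\Pi}_{\mathbf{x}_k}$ together with $\mathbf{\Pi}_{\mathbf{x}_k}\boldsymbol{\xi}_k=\boldsymbol{\xi}_k$, and the second block of \eqref{wmis} to obtain $\langle\mathrm{grad}f_\textrm{BFM}(\mathbf{x}_k),\boldsymbol{\xi}_k\rangle=-2\|\boldsymbol{\eta}_k\|^2<0$, which is the same identity you derive (with $\|\boldsymbol{\eta}_k\|^2=f_\textrm{BFM}(\mathbf{x}_k)$). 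Where you genuinely diverge is in scope: the paper stops there, asserting that verifying the per-iterate descent condition \eqref{descentDirection} "suffices" for gradient-relatedness, whereas you check the actual content of \cite[Def.~4.2.1]{Abs_EtAl-Book2008} --- strict negativity of the $\limsup$ of the inner products along any subsequence converging to a non-critical point, plus boundedness of the corresponding directions. Your $\limsup$ argument is clean and adds real value: the identity $\langle\mathrm{grad}f_\textrm{BFM}(\mathbf{x}_k),\boldsymbol{\xi}_k\rangle=-2f_\textrm{BFM}(\mathbf{x}_k)$ together with the observation that $f_\textrm{BFM}(\mathbf{x}_*)=0$ forces $\mathbf{x}_*$ to be a (critical) global minimizer shows the inner products cannot degenerate to zero at a non-critical limit, a point the paper's strict-but-pointwise inequality does not by itself settle. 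Your boundedness argument is the only place where you pay an extra price: writing \eqref{wmis} as a square system and invoking nonsingularity at the limit point extends the "linearly independent rows" hypothesis, which the paper states only for the iterates, to $\mathbf{x}_*$; this is a reasonable regularity assumption (consistent with the discussion after Assumption \ref{Feasibility}), but it is an additional hypothesis rather than something proved, and you are right to flag it as the delicate step. In short, the paper's proof buys brevity at the cost of conflating "descent direction at every iterate" with "gradient-related"; your proof closes that gap, modulo the limit-point regularity needed for boundedness.
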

\begin{proof}
It suffices to show that the pair $(\mathbf{x}_k,\boldsymbol{\xi}_k)$ satisfies \eqref{descentDirection}, i.e., $\langle\mathrm{grad} f_\textrm{BFM}(\mathbf{x}_k), \boldsymbol{\xi}_k\rangle < 0$.
Expanding the dot product and using \eqref{OurRiemGradEntirePF}, the \emph{lhs} of \eqref{descentDirection} can be written as:
\begin{gather}
    \langle\mathrm{grad} f_\textrm{BFM}(\mathbf{x}_{k}), \boldsymbol{\xi}_k\rangle
    = 2\boldsymbol{\xi}_k^{T}\mathbf{\Pi}_{\mathbf{x}_{k}} \begin{pmatrix} \mathbf{0}_{4J\times1} \\ \mathbf{w}_k-\mathbf{\bar w}
  \end{pmatrix}.
    \label{descent1}
\end{gather}
Using the fact that $\mathbf{\Pi}_{\mathbf{x}_{k}}$ is an orthogonal projection matrix satisfying $\mathbf{\Pi}_{\mathbf{x}_{k}} = \mathbf{\Pi}_{\mathbf{x}_{k}}^T$ \cite{Strang-Book2006}, and the fact that $\boldsymbol{\xi}_k$ lies on the tangent space, and hence, $\mathbf{\Pi}_{\mathbf{x}_{k}}  \boldsymbol{\xi}_k = \boldsymbol{\xi}_k$ (intuitively, the projection of a vector that is already on the tangent space should be itself), we get:
\begin{equation} \label{xi_aux}
    \boldsymbol{\xi}_k^{T}\mathbf{\Pi}_{\mathbf{x}_{k}}
    =\big(\mathbf{\Pi}_{\mathbf{x}_{k}}^T  \boldsymbol{\xi}_k   \big)^{T}
    = \big(\mathbf{\Pi}_{\mathbf{x}_{k}}  \boldsymbol{\xi}_k   \big)^{T}
    = \boldsymbol{\xi}_k^{T}.
\end{equation}
Using \eqref{wmis} and \eqref{xi_aux}, \eqref{descent1} yields: 
\begin{equation}
    \langle\mathrm{grad} f_\textrm{BFM}(\mathbf{x}_{k}), \boldsymbol{\xi}_k\rangle
    = -2 \boldsymbol{\eta}_k^{T}  
    \boldsymbol{\eta}_k
    = -2\| \boldsymbol{\eta}_k\|^{2} < 0,
    \label{DescentTang1}
\end{equation}
where $\boldsymbol{\eta}_k\neq \mathbf{0}$, otherwise \eqref{wmis} implies that $\mathbf{x}_k$, a point on the manifold, has achieved the global minimum of \eqref{LF_as_Opt}, hence the optimal solution is reached.
\end{proof}

Lemma \ref{PropositionDescent}, $\mathcal{R}^\textsc{BFM}$ ---which from Lemma \ref{LemmaRetBFM} satisfies the retraction definition--- and the Armijo rule, guarantee descent at each iteration; hence,
from \cite[Thm. 4.3.1]{Abs_EtAl-Book2008}, every accumulation (limit) point of $\{\mathbf{x}_k\}$, denoted by $\{\mathbf{x}_*\}$, is a critical (stationary) point of the cost function $f_\textrm{BFM}$.

We then show that \eqref{wmis} can be written in the form of \eqref{ApproximateJacob}, using an approximate Hessian (Jacobian), hence it falls into the category of Riemannian approximate Newton methods.
Recall that the Jacobian matrix in \eqref{ApproximateJacob} is the Riemannian Hessian, i.e., $\mathbf{J}( \mathbf{x}_k ) := \mathrm{hess} f ( \mathbf{x}_k )$.
Rearranging the terms in \eqref{wmis}, appending both sides to a $4J\times1$ vector of zeros, and multiplying with $2\mathbf{\Pi}_{\mathbf{x}_{k}}$, we get:
\begin{equation}
     2\mathbf{\Pi}_{\mathbf{x}_{k}} \begin{pmatrix}
    \mathbf{0}_{4J\times4J} \quad \mathbf{0}_{4J\times2J} \\
    \mathbf{0}_{2J\times4J} \quad \mathbf{I}_{2J\times2J} \\
    \end{pmatrix} \boldsymbol{\xi}_k = - 2\mathbf{\Pi}_{\mathbf{x}_{k}} \begin{pmatrix} \mathbf{0}_{4J\times1} \\ \mathbf{w}_k-\mathbf{\bar w}
  \end{pmatrix} , \label{Ax=bMod2}
\end{equation}
where the \emph{rhs} is the Riemannian gradient given by \eqref{OurRiemGradEntirePF}.
Employing the Riemannian Hessian given by \eqref{RiemHessProofEntirePFMan}, and using the property that $\mathbf{\Pi}_{\mathbf{x}_{k}}^2 = \mathbf{\Pi}_{\mathbf{x}_{k}}$ \cite{Strang-Book2006}, we can write \eqref{Ax=bMod2} in the form of  \eqref{ApproximateJacob}, with $\mathbf{E}_k = - \mathbf{\Pi}_{\mathbf{x}_k} \mathbf{{C}}_{\mathbf{x}_k}$.
The local superlinear convergence rate is shown in \cite[Thm. 8.2.1]{Abs_EtAl-Book2008}, provided that $ \|\mathbf{E}_{k}\|_2 \leq \gamma_1 \|\mathrm{grad}f_\textrm{BFM}(\mathbf{x}_{k}) \|_2,$
for some constant $\gamma_1$.
Using \eqref{EqLixkEntirePFMan}, the $n$-th column of square matrix $\mathbf{E}_{k}$, is expressed as $\mathbf{E}_{k,n} = - 2 \boldsymbol{\Pi}_{x_{k}} \boldsymbol{\Gamma}_{n,\mathbf{x}_k}^{T}  \begin{pmatrix} \mathbf{0}_{4J\times1} \\ \mathbf{w}_k-\mathbf{\bar w}
  \end{pmatrix}$,
and hence, using \eqref{OurRiemGradEntirePF}, we get $
    \|\mathbf{E}_{k}\|_2\leq\sum_{n}\|\mathbf{E}_{k,n}\|_2
    \leq \sum_{n} \|\boldsymbol{\Gamma}_{n,\mathbf{x}_k}\|_2
    \|\mathrm{grad}f_\textrm{BFM}(\mathbf{x}_{k})\|_2,$
with $ \gamma_1 = \sum_n \|\boldsymbol{\Gamma}_{n,\mathbf{x}_k}\|_2$.

\subsection{Proof of Proposition \ref{Prop2}}
\label{ProofProp2}

\begin{lemma}
\label{PropositionDescent2}
The direction sequence $\{\boldsymbol{\zeta}_k\}$ generated from the solution of \eqref{QE_xiTang} is gradient-related (\cite[Def. 4.2.1]{Abs_EtAl-Book2008}).
\end{lemma}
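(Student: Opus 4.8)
The plan is to mirror the proof of Lemma~\ref{PropositionDescent}, now carried out on the QE manifold. It suffices to verify that the pair $(\mathbf{u}_k,\boldsymbol{\zeta}_k)$ satisfies the descent condition \eqref{descentDirection}, i.e., $\langle\mathrm{grad}f_\textrm{QE}(\mathbf{u}_k),\boldsymbol{\zeta}_k\rangle<0$; indeed $\boldsymbol{\zeta}_k$ solves the linear system \eqref{QE_xiTang}, whose (assumed unique) solution is exactly the minimizer of $f_\textrm{QE}(\mathbf{u}_k+\boldsymbol{\zeta}_k)$ over the tangent space, so once strict descent is established the sequence $\{\boldsymbol{\zeta}_k\}$ is gradient-related in the sense of \cite[Def.~4.2.1]{Abs_EtAl-Book2008}. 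First I would substitute the closed form of the Riemannian gradient \eqref{OurRiemGrad}, obtaining $\langle\mathrm{grad}f_\textrm{QE}(\mathbf{u}_k),\boldsymbol{\zeta}_k\rangle = 2\,\boldsymbol{\zeta}_k^{T}\mathbf{\Pi}_{\mathbf{u}_k}\mathbf{A}^{T}(\mathbf{A}\mathbf{u}_k-\mathbf{b})$.

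Next I would collapse the projection exactly as in \eqref{xi_aux}: since $\mathbf{\Pi}_{\mathbf{u}_k}$ is an orthogonal projector it is symmetric and idempotent, and since the first constraint of \eqref{QE_xiTang} forces $\boldsymbol{\zeta}_k\in\mathcal{T}_{\mathbf{u}_k}\mathcal{M}_\textrm{QE}$ we have $\mathbf{\Pi}_{\mathbf{u}_k}\boldsymbol{\zeta}_k=\boldsymbol{\zeta}_k$, hence $\boldsymbol{\zeta}_k^{T}\mathbf{\Pi}_{\mathbf{u}_k}=(\mathbf{\Pi}_{\mathbf{u}_k}\boldsymbol{\zeta}_k)^{T}=\boldsymbol{\zeta}_k^{T}$. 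This reduces the inner product to $2\,\boldsymbol{\zeta}_k^{T}\mathbf{A}^{T}(\mathbf{A}\mathbf{u}_k-\mathbf{b})$. I would then invoke the second constraint of \eqref{QE_xiTang}, namely $\mathbf{A}(\mathbf{u}_k+\boldsymbol{\zeta}_k)=\mathbf{b}$, which gives $\mathbf{A}\boldsymbol{\zeta}_k=-(\mathbf{A}\mathbf{u}_k-\mathbf{b})$, whence $\langle\mathrm{grad}f_\textrm{QE}(\mathbf{u}_k),\boldsymbol{\zeta}_k\rangle=-2\,\|\mathbf{A}\mathbf{u}_k-\mathbf{b}\|_2^{2}<0$, the strict inequality holding unless $\mathbf{A}\mathbf{u}_k=\mathbf{b}$, in which case $\mathbf{u}_k$ already attains the global minimum $f_\textrm{QE}=0$ under Assumption~\ref{Feasibility} and the algorithm has converged. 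If one wants the full strength of \cite[Def.~4.2.1]{Abs_EtAl-Book2008}, I would additionally use $\|\mathbf{\Pi}_{\mathbf{u}_k}\|_2\le 1$ to get $\|\mathrm{grad}f_\textrm{QE}(\mathbf{u}_k)\|_2\le 2\|\mathbf{A}\|_2\,\|\mathbf{A}\mathbf{u}_k-\mathbf{b}\|_2$, yielding the angle-type estimate $\langle\mathrm{grad}f_\textrm{QE}(\mathbf{u}_k),\boldsymbol{\zeta}_k\rangle\le-\tfrac{1}{2\|\mathbf{A}\|_2^{2}}\|\mathrm{grad}f_\textrm{QE}(\mathbf{u}_k)\|_2^{2}$, together with boundedness of $\{\boldsymbol{\zeta}_k\}$ on compact subsets away from critical points.

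The computation is routine linear algebra, so there is no single hard obstacle; the only points requiring care are (i) justifying that \eqref{QE_xiTang} is consistent with a unique solution, which rests on the linear-independence-of-rows qualification already invoked when reading \eqref{QE_xiTang} as a tangent-space least-squares problem, and (ii) porting the manifold/tangent identities \eqref{ManCurIter}--\eqref{TangCurIter} from the $\mathbf{x}$-variables to the $\mathbf{u}$-variables. Given Lemma~\ref{PropositionDescent2}, the remainder of the proof of Proposition~\ref{Prop2} parallels that of Proposition~\ref{Prop1}: combining the gradient-relatedness with retraction $\mathcal{R}^\textsc{QE}_{2}$ (valid by Lemma~\ref{LemmaRetQE2}) and the Armijo rule \eqref{Armijo} gives guaranteed descent and convergence to a critical point, while rewriting \eqref{QE_xiTang} in the form \eqref{ApproximateJacob} with $\mathbf{E}_k=-\mathbf{\Pi}_{\mathbf{u}_k}\mathbf{L}_{\mathbf{u}_k}$ and bounding $\|\mathbf{E}_k\|_2$ via \eqref{EqLixk} and \eqref{OurRiemGrad} yields $\|\mathbf{E}_k\|_2\le\gamma_2\|\mathrm{grad}f_\textrm{QE}(\mathbf{u}_k)\|_2$ with $\gamma_2=\sum_n\|\boldsymbol{\Lambda}_{n,\mathbf{u}_k}\|_2$, hence the local superlinear convergence rate by \cite[Thm.~8.2.1]{Abs_EtAl-Book2008}.
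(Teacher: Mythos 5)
Your proposal is correct and follows essentially the same route as the paper's proof: substitute the gradient formula \eqref{OurRiemGrad}, collapse the projector via \eqref{xi_aux} using symmetry and tangency of $\boldsymbol{\zeta}_k$, and use the constraint $\mathbf{A}(\mathbf{u}_k+\boldsymbol{\zeta}_k)=\mathbf{b}$ to obtain $\langle\mathrm{grad}f_\textrm{QE}(\mathbf{u}_k),\boldsymbol{\zeta}_k\rangle=-2\|\mathbf{A}\boldsymbol{\zeta}_k\|^2=-2\|\mathbf{A}\mathbf{u}_k-\mathbf{b}\|^2<0$ unless the optimum is already attained, which is exactly the paper's computation with the two substitutions performed in the opposite order. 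Your added angle-type bound toward the full gradient-relatedness definition is a harmless extra beyond what the paper records.
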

\begin{proof}
The search direction $\boldsymbol{\zeta}_k$ in \eqref{QE_xiTang} satisfies:
\begin{align*}
\langle\mathrm{grad} f_\textrm{QE}(\mathbf{u}_{k}), \boldsymbol{\zeta}_k\rangle
    & = 2\boldsymbol{\zeta}_k^{T}\mathbf{\Pi}_{\mathbf{u}_{k}} \mathbf{A}^T (\mathbf{A} \mathbf{u}_{k}-\mathbf{b})\\
    & = -2 \boldsymbol{\zeta}_k^{T}\mathbf{\Pi}_{\mathbf{u}_{k}} \mathbf{A}^T \mathbf{A} \boldsymbol{\zeta}_k = -2 \boldsymbol{\zeta}_k^{T} \mathbf{A}^T \mathbf{A} \boldsymbol{\zeta}_k \nonumber \\
    & = -2 (\mathbf{A} \boldsymbol{\zeta}_k)^{T}  
    (\mathbf{A} \boldsymbol{\zeta}_k) 
    = -2\|\mathbf{A} \boldsymbol{\zeta}_k\|^{2} < 0,\nonumber    
\end{align*}
where we used \eqref{xi_aux} to write $\boldsymbol{\zeta}_k^{T}\mathbf{\Pi}_{\mathbf{u}_{k}}=\boldsymbol{\zeta}_k^{T}$.
Note that $\mathbf{A} \boldsymbol{\zeta}_k \neq 0$, otherwise we should have reached the optimal solution.
\end{proof}
Similarly to the proof of Proposition \ref{Prop1}, Lemma \ref{PropositionDescent2}, Armijo rule, Lemmas \ref{LemmaRetQE1} and \ref{LemmaRetQE2}, and \cite[Thm. 4.3.1]{Abs_EtAl-Book2008} guarantee descent and that $\{\mathbf{x}_*\}$ is a critical point of the cost function $f_\textrm{QE}$.
Then, multiplying both sides of \eqref{QE_xiTang} with $2\mathbf{\Pi}_{\mathbf{u}_{k}}\mathbf{A}^T$ yields $2\mathbf{\Pi}_{\mathbf{u}_{k}}\mathbf{A}^T \mathbf{A} \boldsymbol{\zeta}_k = - 2\mathbf{\Pi}_{\mathbf{u}_{k}}\mathbf{A}^T \big(\mathbf{A} \mathbf{u}_k - \mathbf{b}\big)$, which, using  \eqref{RiemHessProof}, can be written in the form of \eqref{ApproximateJacob}, with $\mathbf{E}_k = - \mathbf{\Pi}_{\mathbf{u}_k} \mathbf{{L}}_{\mathbf{u}_k}$.
Lastly, local superlinear convergence rate from \cite[Thm. 8.2.1]{Abs_EtAl-Book2008} holds for $\gamma_2 = \sum_n \|\boldsymbol{{\Lambda}}_{n,\mathbf{u}_k}\|_2$, since, using \eqref{EqLixk}, the $n$-th column of $\mathbf{E}_{k}$, is expressed as $\mathbf{E}_{k,n} = - 2 \mathbf{\Pi}_{\mathbf{u}_{k}} \boldsymbol{\Lambda}_{n,\mathbf{u}_k}^{T}  \mathbf{A}^T (\mathbf{A} \mathbf{u}_{k}-\mathbf{b})$, and using \eqref{OurRiemGrad}, we have $\|\mathbf{E}_{k}\|_2 \leq \sum_{n} \|\mathbf{E}_{k,n}\|_2
        \leq \sum_n 
        \|\boldsymbol{\Lambda}_{n,\mathbf{u}_k}\|_2 \|\mathrm{grad}f_\textrm{QE}(\mathbf{u}_{k}) \|_2.$

\subsection{Proof of Corollary \ref{cor1}}
\label{Proofcor1}

Employing \textsc{Flat}, and assuming, without loss of generality that the slack node voltage is equal to 1, $\mathbf{u}_0$ is given by $\mathbf{P}_0=\mathbf{0}$, $\mathbf{Q}_0=\mathbf{0}$, $\mathbf{l}_0=\mathbf{0}$, and $\mathbf{v}_0=\mathbf{1}$.
Hence, \eqref{QE_xiTang} yields $\boldsymbol{\zeta}_0 \in \mathcal{T}_{\mathbf{u}_0}\mathcal{M}_\textrm{QE}$, requiring that $\boldsymbol{\zeta}_{0}^{l} = \mathbf{0}$, where $\boldsymbol{\zeta}_{0}^{l}$ is the element of $\boldsymbol{\zeta}_0$ associated with the squared current magnitude.
Also, \eqref{QE_xiTang} yields $\mathbf{A} \Tilde{\mathbf{u}}_0 = \mathbf{b}$, representing the simplified DistFlow equations \eqref{DistFlow1}--\eqref{DistFlow3}
for $\Tilde{\boldsymbol{u}}_0 = \boldsymbol{u}_0 +\boldsymbol{\zeta}_0$ (more precisely for $\Tilde{\mathbf{P}}_0$, $\Tilde{\mathbf{Q}}_0$ and $\Tilde{\mathbf{v}}_0$, since $\Tilde{\mathbf{l}}_0 = \mathbf{l}_0 + \boldsymbol{\zeta}_{0}^{l} = \mathbf{0}$).
Retraction $\mathcal{R}^\textsc{QE}_{2}$ then derives the values of the current using \eqref{RetQE2}.


\ifCLASSOPTIONcaptionsoff
  \newpage
\fi



%

%
%

\bibliographystyle{IEEEtran}
\bibliography{IEEEabrv,Ref}

%








\end{document}